\newtheorem*{urem}{Remark}
\newtheorem{theorem}{Theorem}[section]
\newtheorem{rem}[theorem]{Remark}
\newtheorem{lem}[theorem]{Lemma}
\newtheorem{propos}[theorem]{Proposition}
\newtheorem{defin}[theorem]{Definition}
\begin{document}

\title{On the Union of Arithmetic Progressions}
\author{
Shoni Gilboa\thanks{Mathematics Dept., The Open University of Israel, Raanana 43107 , Israel.  \texttt{tipshoni@gmail.com}.}
\and
Rom Pinchasi\thanks{
Mathematics Dept.,
      Technion---Israel Institute of Technology,
      Haifa 32000, Israel.
      \texttt{room@math.technion.ac.il}. Supported by ISF grant (grant No. 1357/12) and by BSF grant (grant No. 2008290).
} 
}

\maketitle

\begin{abstract}
We show that for every $\varepsilon>0$ there is an absolute constant 
$c(\varepsilon)>0$ such that the following is true.
The union of any $n$ arithmetic progressions, each of 
length $n$, with pairwise distinct differences must consist of at least 
$c(\varepsilon)n^{2-\varepsilon}$ elements.
We observe, 
by construction, that one can find $n$ arithmetic progressions, each 
of length $n$, with pairwise distinct differences such that the cardinality of 
their union is $o(n^2)$.
We refer also to the non-symmetric case of $n$ arithmetic
progressions, each of length $\ell$, for various regimes of $n$ and $\ell$.
\end{abstract}

\section{Introduction}

For integers $n>1$ and $\ell>1$ let $u_{\ell}(n)$ 
be the minimum possible cardinality of a union of $n$ arithmetic progressions,
each of length $\ell$, with pairwise distinct differences. 

Clearly, $u_{\ell}(n)\leq n\,\ell$, but this inequality is not tight in general, not even up to a multiplicative absolute constant. 

For small values of $\ell$ it is not hard to see, for instance, that 
$u_2(n)=\lceil\frac{1}{2}+\sqrt{2n+\frac{1}{4}}\rceil$.
Ruzsa proved (in \cite{Rusza}) that there is an absolute constant $c>0$, such that for any sufficiently large integer $m$ there is a set $A$ of $m$ integers such that $|A+A|\leq m^{2-c}$ but $|A-A|\geq m^2-m^{2-c}$. 
For any positive $d\in A-A$ choose $a_d,b_d\in A$ such that $b_d-a_d=d$. Now $\left(2a_d,a_d+b_d,2b_d\right)_{d\in A-A}$ is a set of $\frac{|A-A|-1}{2}$ 
arithmetic progressions of length $3$ with pairwise distinct differences, 
and their union is contained in $(A+A)\cup 2A$. We thus obtain at least 
$\frac{m^2-m^{2-c}-1}{2}$ arithmetic progressions of length $3$, with 
pairwise distinct differences, such that the cardinality of their union is at most $m^{2-c}+m$. It follows that $u_3(n)=O(n^{1-\frac{c}{2}})$. 

The trivial upper bound $u_{\ell}(n)\leq n\,\ell$ is far from being tight for large values of $\ell$ as well. In particular, consider the symmetric case where $\ell=n$. It turns out that
$u_n(n)=o(n^2)$.
To see this take (perhaps the most natural choice of) arithmetic progressions:
$A_j=\{i\,j \mid i \in [n]\}$ for $j=1, \ldots, n$.
Clearly, the union $\bigcup_{j=1}^{n}A_j$ is precisely the set
$\{i\,j \mid i,j \in [n]\}$.  
It was shown already by Erd\H os in 1955 (\cite{Erdos}) that 
$$\lvert\{i\, j\mid i,j\in[n]\}\rvert=o\left(n^2/(\log n)^{\alpha}\right)$$
for some $\alpha>0$. The exact assymptotics
$$\left\lvert\left\{i\, j\mid i,j\in[n]\right\}\right\rvert\sim\frac{n^2}{(\log n)^{1-\frac{1+\log\log 2}{\log 2}}(\log\log n)^{\frac{3}{2}}}$$ 
 was given in 2008 by Ford (\cite{Ford}).
Consequently, we obtain the desired improved upper bound for $u_{n}(n)$.
\vspace{5pt}

In this paper we show that $u_{\ell}(n)$ cannot be much smaller than 
$n\,\ell$, provided $\ell$ is not much smaller than $n$, as captured in the following theorem, giving a lower bound for $u_{\ell}(n)$ for smaller values of $\ell$ as well.

\begin{theorem}\label{thm:gen-prog} For every $\varepsilon>0$ there is a positive constant $c_1(\varepsilon)$, depending only on $\varepsilon$, such that for any positive integers $n$ and $\ell$  
\begin{equation*}\label{eq:gen-prog}u_{\ell}(n)\geq\begin{cases} c_1(\varepsilon)\,n^{\frac{1}{2}-\varepsilon}\,\ell&\text{for }\;\ell\leq n^{\frac{1}{2}-\varepsilon}\\c_1(\varepsilon)\,\ell^2&\text{for }\; n^{\frac{1}{2}-\varepsilon}\leq\ell\leq n^{1-\varepsilon}\\ c_1(\varepsilon)\, n^{1-\varepsilon}\,\ell&\text{for }\; n^{1-\varepsilon}\leq\ell.\end{cases}\end{equation*}
\end{theorem}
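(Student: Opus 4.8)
The plan is to bound the overlaps between the progressions and turn this into a lower bound on $N:=|U|$, where $U=\bigcup_{i=1}^n P_i$ and $P_i$ is the $i$-th progression, with common difference $d_i$. Two of the ingredients are combinatorial and one is arithmetic. The arithmetic heart, which I expect to be the main obstacle, is the estimate
\[
S:=\sum_{i\ne j}\frac{\gcd(d_i,d_j)}{\max(d_i,d_j)}\le C_\varepsilon\, n^{1+\varepsilon},
\]
valid for any $n$ distinct positive integers $d_1,\dots,d_n$. This is in essence a GCD (Gál-type) sum: since $\gcd(d_i,d_j)/\max(d_i,d_j)\le\gcd(d_i,d_j)/\sqrt{d_id_j}$, one is reduced to bounding $\sum_{i,j}\gcd(d_i,d_j)/\sqrt{d_id_j}$, whose maximal order over distinct integers is $n^{1+o(1)}$. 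The naive divisor estimate gives only $S\le 2\sum_j\tau(d_j)$, which is $O_\varepsilon(n^{1+\varepsilon})$ when the $d_j$ are polynomially bounded but breaks down for highly composite differences; handling those requires a genuine GCD-sum argument, and this is exactly where the loss $n^{\varepsilon}$ and the constant $c_1(\varepsilon)$ originate.

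Granting the lemma, the first combinatorial input is that for $i\ne j$ the intersection $P_i\cap P_j$ is an arithmetic progression of common difference $\mathrm{lcm}(d_i,d_j)$ contained in an interval of length at most $(\ell-1)\min(d_i,d_j)$, so that
\[
|P_i\cap P_j|\le 1+\frac{(\ell-1)\gcd(d_i,d_j)}{\max(d_i,d_j)}.
\]
Writing $\rho(u)=\#\{i:u\in P_i\}$, we have $\sum_u\rho(u)=n\ell$, and Cauchy--Schwarz gives $N\ge (n\ell)^2/\sum_u\rho(u)^2$. Since $\sum_u\rho(u)^2=\sum_{i,j}|P_i\cap P_j|\le n\ell+n(n-1)+(\ell-1)S$, the lemma yields
\[
N\ge\frac{n^2\ell^2}{\,n\ell+n^2+(\ell-1)\,C_\varepsilon n^{1+\varepsilon}\,}.
\]
For $\ell\ge n^{1-\varepsilon}$ the term $\ell\,n^{1+\varepsilon}$ dominates the denominator and this gives $N\ge c_\varepsilon n^{1-\varepsilon}\ell$ (regime 3); for $n^{1/2-\varepsilon}\le\ell\le n^{1-\varepsilon}$ the term $n^2$ dominates and it gives $N\ge c_\varepsilon\ell^2$ (regime 2, at least once $\ell$ is not too small).

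The first regime, and the lower end of the second, is precisely where the $n^2$ coming from counting each intersecting pair once swamps the estimate, leaving only $N\ge c\ell^2$, which is too weak. To recover $n^{1/2-\varepsilon}\ell$ I would instead exploit the full length of each progression through its dilates. Set $r(m)=\#\{(x,y)\in U^2:y-x=m\}$; then $r(kd_i)\ge \ell-k$, so with $K=\lfloor\ell/2\rfloor$ every element of $R:=\{\,kd_i:1\le i\le n,\ 1\le k\le K\,\}$ satisfies $r(m)\ge \ell/2$. The number of collisions among the $kd_i$ is exactly $\sum_m w(m)^2=\sum_{i,j}\#\{(k,k')\le K:kd_i=k'd_j\}\le K(n+S)$, the same GCD sum reappearing, so Cauchy--Schwarz gives $|R|\ge (nK)^2/\big(K(n+S)\big)\ge c_\varepsilon n^{1-\varepsilon}\ell$. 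Finally, since the elements of $R$ are distinct and positive,
\[
\binom{N}{2}=\sum_{m>0}r(m)\ge |R|\cdot\frac{\ell}{2}\ge c_\varepsilon\, n^{1-\varepsilon}\ell^2,
\]
whence $N\ge c_\varepsilon' n^{1/2-\varepsilon/2}\ell$, which covers the first regime together with the part of the second not reached above. Taking in each range the better of the two bounds and readjusting $\varepsilon$ produces the three stated cases. The crux throughout is the GCD-sum lemma; everything else is Cauchy--Schwarz and an elementary divisor count.
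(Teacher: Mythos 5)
Your superstructure is sound, and much of it coincides with a route the paper itself records. For the regimes $\ell\ge n^{\frac12-\varepsilon}$, your argument --- the intersection bound $|P_i\cap P_j|\le 1+(\ell-1)\frac{\gcd(d_i,d_j)}{\max(d_i,d_j)}$, the Dawson--Sankoff/Cauchy--Schwarz inequality $|U|\ge (n\ell)^2/\sum_{i,j}|P_i\cap P_j|$, and the GCD-sum bound --- is precisely the alternative proof of Proposition \ref{prop:gen-prog} sketched in Section \ref{sec:con}; your $S$ is exactly twice the sum bounded in Theorem \ref{theorem:app}. Your handling of the regime $\ell\le n^{\frac12-\varepsilon}$ is genuinely different from the paper's: Proposition \ref{cor:gen-prog} finds a point lying in at least $\sqrt n$ progressions and applies the bound on $f_d$ (Proposition \ref{f}) to dilates of the differences of the progressions through that point, whereas you count collisions among all dilates $kd_i$, $k\le\lfloor\ell/2\rfloor$, and convert $|R|\ge c\,n^{1-\varepsilon}\ell$ distinct differences, each realized at least $\ell/2$ times in $U-U$, into $\binom{|U|}{2}\ge |R|\,\ell/2$. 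This checks out: the collision count $\sum_m w(m)^2\le K(n+S)$ and the bound $r(kd_i)\ge\ell-k$ are correct, and they yield $|U|\ge c\,n^{\frac12-\frac{\varepsilon}{2}}\ell$, which covers the first regime; it is a clean alternative that runs on the same GCD sum instead of the separate function $f_d$. (Like the sketch in Section \ref{sec:con}, it tacitly normalizes the differences to be positive integers; the paper's official proofs handle arbitrary real differences via the sets $R_d$.)

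The caveat is that everything rests on the lemma you ``grant,'' and that lemma \emph{is} the paper. The authors do not import it from anywhere: Sections \ref{sec:g}--\ref{sec:f} are devoted to proving it, in the equivalent form of Proposition \ref{g} (via the Bondy--Simonovits even-cycle theorem, a probabilistic sampling step, and the divisor bound of Lemma \ref{divisors}), from which Theorem \ref{theorem:app} follows by partial summation. Your reduction $\frac{\gcd(d_i,d_j)}{\max(d_i,d_j)}\le\frac{\gcd(d_i,d_j)}{\sqrt{d_id_j}}$ and the assertion that this G\'al-type sum has maximal order $n^{1+o(1)}$ over distinct integers are both correct --- bounds of that strength do exist in the literature (Dyer--Harman, with the sharp order determined much later by Bondarenko--Seip) --- so read as a proof modulo a quotable external theorem, your argument is complete and in places tidier than the paper's. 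But as a self-contained argument it stops exactly at its declared crux: you correctly identify where the difficulty lives (distinct, possibly huge, highly composite differences, for which the divisor-count bound fails) and then point at the literature rather than supply the argument, which is precisely the content the paper provides from scratch.
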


In the proof of Theorem \ref{thm:gen-prog} 
we study and use upper bounds for the following two functions, 
that are of independent interest.
$$
f_d(m,n)=\max_{\substack{A,B\subset(0,\infty)\\|A|\leq m~,~|B|\leq n}}
\left\lvert\left\{(a,b)\in A\times B\mid\frac{a}{b}\in[d]\right\}\right\rvert,
$$
$$
g_d(n)=\max_{\substack{B\subset(0,\infty)\\|B|\leq n}}
\left\lvert\left\{(b_1,b_2)\in B^2\mid b_1<b_2,~\exists p,q\in[d]:\frac{b_1}{b_2}=\frac{p}{q}\right\}\right\rvert.
$$
\medskip

The paper is organized as follows. In Sections \ref{sec:g} we provide an upper
bound for the function $g_{d}$ above. Using this bound, we provide 
an upper bound for the function $f_{d}$ in Section 
\ref{sec:f}. 
Theorem \ref{thm:gen-prog} is proved in 
Section \ref{sec:prog}. Section \ref{sec:con} contains 
one of many possible number theory applications to the upper bounds for 
$f_{d}$ and $g_{d}$.

\section{Rational quotients with bounded numerator and denominator}\label{sec:g}
For positive integer $d$ define

$$
R_d=\left\{\frac{k}{\ell} \mid k,\ell\in[d]\right\}.
$$

\begin{defin}\label{definition:g}
For a positive integer $d$ and 
a finite set $B$ of positive real numbers define 
$$ \mathcal{G}_d(B)=\left\{\{b_1,b_2\} \subseteq B\mid 1\neq\frac{b_1}{b_2}\in R_d\right\}.$$

For positive integers $n$ and $d$ define 
$$
g_d(n)=\max_{\substack{B\subset(0,\infty)\\|B|\leq n}}
|\mathcal{G}_{d}(B)|.
$$
\end{defin} 

Clearly,
\begin{equation}\label{eq:g_easy_bound} g_d(n)\leq(n-1)|R_d|<n\, d^2.\end{equation}

This bound is useful when $d$ is small. For large values of $d$ we have 
the following improved upper bound:

\begin{propos}\label{g} 
For any positive integer $k$, there is a positive constant $c_2(k)$, depending only on $k$, such that for any positive integer $n$ and any 
integer $d>c_2(k)$
\begin{equation}\label{eq:g}g_d(n)<(200k+1)n^{1+\frac{1}{k}}d^{1-\frac{1}{2k}}.\end{equation}
\end{propos}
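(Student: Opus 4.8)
The plan is to sandwich $g_d(n)$ between two estimates on the number of short walks in an auxiliary graph. First I would record the reformulation: for $b_1<b_2$, writing $b_1/b_2=a/b$ in lowest terms ($\gcd(a,b)=1$), the condition $b_1/b_2\in R_d$ is equivalent to $b\le d$, since $a/b=p/q$ with $p,q\in[d]$ forces $q\ge b$ and is achievable with $p=a,q=b$. Thus $\mathcal G_d(B)$ is exactly the set of pairs of $B$ whose reduced ratio has denominator at most $d$. Because $R_d$ is closed under $x\mapsto 1/x$, I may regard these pairs as the edge set of an undirected graph $G$ on vertex set $B$, and write $E:=|\mathcal G_d(B)|$ and $N:=|B|\le n$. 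The proposition then becomes the assertion $E<(200k+1)\,n^{1+1/k}d^{1-1/(2k)}$.

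The lower half of the sandwich is a convexity bound. Let $w_k$ denote the number of walks $v_0,v_1,\dots,v_k$ of length $k$ in $G$ (consecutive vertices adjacent, repetitions allowed). By the Blakley--Roy inequality applied to the path with $k$ edges (equivalently, convexity of $t\mapsto t^k$ over the degree sequence) one has $w_k\ge N\,(2E/N)^k=2^kE^k/N^{k-1}$. Hence it suffices to prove the upper bound $w_k\le\bigl(2(200k+1)\bigr)^k N^2 d^{\,k-1/2}$: combining the two inequalities gives $E^k\le(200k+1)^kN^{k+1}d^{\,k-1/2}$, and taking $k$-th roots (using $N\le n$) yields $E\le(200k+1)\,n^{1+1/k}d^{1-1/(2k)}$, with the strict inequality coming from the slack already present in \eqref{eq:g_easy_bound}. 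The entire content is thus concentrated in one statement about walks, $w_k\le C^kN^2d^{\,k-1/2}$, with an admissible per-step loss of a factor of size $O(k)$.

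The hard part will be this upper bound on $w_k$. The starting point is that along any walk the endpoints satisfy $v_0/v_k\in R_{d^k}$, since a product of $k$ elements of $R_d$ has numerator and denominator at most $d^k$. A naive count (take $N^2$ endpoint pairs times an estimate for the admissible joining walks) loses far too much: it ignores that every intermediate vertex must again lie in $B$, and it overrepresents walks through high-degree vertices. The mechanism I expect to supply the crucial factor $d^{-1/2}$, rather than the naive $d^{k}$, is the representation structure of Farey fractions, namely that a reduced fraction of denominator $b$ is represented as $p/q$ with $p,q\in[d]$ in only $\lfloor d/b\rfloor$ ways; so ratios of large reduced denominator are rigid, while ratios of small denominator are flexible but correspondingly rare. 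Concretely I would partition the walks according to the dyadic sizes of the reduced denominators $b_1,\dots,b_k$ of the $k$ successive step-ratios, count the walks in each dyadic class by choosing the steps one at a time subject to membership in $B$, and balance the two competing effects by Cauchy--Schwarz; a careful grouping of the $k$ coordinates should compress the number of relevant classes so that the accumulated constant is $O(k)$ per step rather than $O(\log d)$ per step. The degenerate and near-closed walks, where $v_0=v_k$ or $v_0/v_k$ has very small reduced denominator, are the enemy of the bound and must be isolated; these I would control directly through the elementary estimate \eqref{eq:g_easy_bound} applied at scale $d^k$, which is affordable precisely because such walks are scarce.

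The main obstacle, then, is not the convexity step but extracting the $\sqrt d$ saving in the walk count uniformly in $k$ while holding the accumulated constant down to $(200k+1)$; this is exactly where the Farey representation count and the dyadic balancing must be executed with care, and where I expect most of the technical work to lie. The hypothesis $d>c_2(k)$ should enter only to absorb lower-order contributions, such as the additive $N$ from trivial walks and the roundings in $\lfloor d/b\rfloor$, which become negligible once $d$ exceeds a threshold depending on $k$. With these pieces assembled, the two halves of the sandwich combine to yield \eqref{eq:g}.
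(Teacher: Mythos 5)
Your frame (a convexity lower bound for path counts against an arithmetic upper bound) is in the same spirit as the paper, but the entire content of the proposition has been deferred to the unproven walk estimate $w_k\le C^k N^2 d^{\,k-1/2}$, and the mechanism you sketch for it cannot deliver it. The paper's arithmetic input is the closure constraint: for a \emph{closed} cycle the step ratios satisfy $r_1r_2\cdots r_{2k}=1$, which converts the count into counting solutions of $p_1\cdots p_{2k}=q_1\cdots q_{2k}$ in $[d]^{2k}$, bounded by $\sum_{m\le d^{2k}}d(m)^{2k}$ via the divisor bound (Lemma \ref{cor-upper}). An \emph{open} walk has no such relation among its steps, so the divisor-function mechanism has nothing to grip; and your proposed substitute, the Farey multiplicity $\lfloor d/b\rfloor$, counts representations $(p,q)\in[d]^2$ of a ratio, which is irrelevant when counting walks (each step ratio is a single rational, counted once, and there can be $\approx d^2$ distinct ratios available per step). ``Membership of the intermediate vertices in $B$'' is not a constraint you can quantify per step without knowing $B$, which is circular. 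Note also that your target bound is not an easier intermediate statement: when $E$ is near the extremal value allowed by Proposition \ref{g}, Blakley--Roy already forces $w_k$ to be of order $C^kN^2d^{\,k-1/2}$, so the walk bound is essentially equivalent in strength to the proposition itself.

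The deeper missing idea is how to handle degeneracy, and this is exactly where the paper's two devices enter. If you repair the closure problem by passing to closed walks (traces), walks with repeated vertices dominate, and the divisor bound on closed walks only yields $E\lesssim N^{1+\frac{1}{2k}}d^{\,1+o(1)}$ --- a $d$-exponent above $1$, i.e.\ no better than \eqref{eq:g_easy_bound} in the relevant regime --- so one can never reach $d^{1-\frac{1}{2k}}$ this way. The paper avoids this by counting only cycles with $2k$ \emph{distinct} vertices, so that (i) Bondy--Simonovits applies to give the lower bound of Lemma \ref{basic-lower}, and (ii) under random sparsification with probability $p$ (Lemma \ref{adv-lower}) simple $2k$-cycles scale as $p^{2k}$ while edges scale as $p^2$, which is what kills the otherwise fatal $d^{2k}$ factor in the cycle count. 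Your proposal contains no analogue of either the distinctness requirement or the sampling step; without them, dyadic decomposition and Cauchy--Schwarz on step denominators will not produce the $\sqrt{d}$ saving, and I do not see how to complete the argument along the route you describe.
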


The proof of Proposition \ref{g} will follow by comparing upper and lower bounds
for the cardinality of the set 
$$
\mathcal{C}_{d,2k}(B)=\left\{(b_1,b_2,\ldots,b_{2k})\in B^{2k} \mid 
\frac{b_1}{b_{2}},\frac{b_2}{b_{3}},\ldots,\frac{b_{2k-1}}{b_{2k}},\frac{b_{2k}}{b_1}\in R_d,\,\forall 1\leq i<j\leq 2k:b_i\neq b_j\right\},
$$
in terms of $|\mathcal{G}_{d}(B)|$,
where $B$ is a finite set of positive real numbers.

We start with bounding the cardinality of $\mathcal{C}_{d,2k}(B)$ from below.
We first get a basic lower bound for $|\mathcal{C}_{d,2k}(B)|$ using the Bondy-Simonovits Theorem (\cite{BondySimonovits}), which states that a graph with $n$ vertices and no simple 
cycles of length $2k$ has no more than $100 k\cdot n^{1+\frac{1}{k}}$ edges.
Later, we enhance this basic lower bound for $|\mathcal{C}_{d,2k}(B)|$
in the case where $|\mathcal{G}_{d}(B)|$ is large. 

\begin{lem}\label{basic-lower}
For any positive integers $k$ and $d$, and for any finite set $B$ of positive real numbers,
$$
\frac{1}{4k}\left\lvert\mathcal{C}_{d,2k}(B)\right\rvert
\geq
\left\lvert\mathcal{G}_{d}(B)\right\rvert-100k\left\lvert B\right\rvert^{1+\frac{1}{k}}.
$$
\end{lem}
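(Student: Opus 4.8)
The plan is to interpret both quantities graph-theoretically. I would introduce the graph $H$ on vertex set $B$ in which two distinct vertices $b_1,b_2$ are joined by an edge precisely when $\frac{b_1}{b_2}\in R_d$. Since $R_d$ is closed under taking reciprocals (if $\frac{p}{q}\in R_d$ then $\frac{q}{p}\in R_d$), this adjacency relation is symmetric, so $H$ is a well-defined simple graph with exactly $|\mathcal{G}_d(B)|$ edges. The first thing to record is that $\mathcal{C}_{d,2k}(B)$ is exactly the set of closed walks $b_1,b_2,\ldots,b_{2k},b_1$ in $H$ through $2k$ distinct vertices: the requirement that consecutive ratios lie in $R_d$, together with the distinctness of the $b_i$, forces each consecutive (cyclic) pair to be an edge, and makes the walk a genuine $2k$-cycle equipped with a choice of starting vertex and orientation. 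Each unlabelled copy of $C_{2k}$ in $H$ therefore accounts for exactly $2\cdot 2k=4k$ tuples in $\mathcal{C}_{d,2k}(B)$, so that $\frac{1}{4k}|\mathcal{C}_{d,2k}(B)|$ equals the number of copies of $C_{2k}$ in $H$.

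With this identification the lemma reduces to the purely combinatorial claim that a graph $H$ on $n=|B|$ vertices with $e=|\mathcal{G}_d(B)|$ edges contains at least $e-100k\,n^{1+\frac{1}{k}}$ copies of $C_{2k}$. To prove this I would run a greedy edge-peeling argument driven by the Bondy--Simonovits theorem. Set $H_0=H$ and iterate: as long as the current graph $H_i$ has more than $100k\,n^{1+\frac{1}{k}}$ edges, the Bondy--Simonovits theorem (\cite{BondySimonovits}) guarantees a copy of $C_{2k}$ inside $H_i$; I select one such copy, delete a single one of its edges $e_i$ to obtain $H_{i+1}$, and continue. Because each step removes exactly one edge, the process runs for at least $e-100k\,n^{1+\frac{1}{k}}$ steps before the edge count can drop to the threshold.

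The crux is to see that the cycles produced across different steps are pairwise distinct, which is what turns a sequence of existence statements into a genuine count. This follows from the nesting $H_0\supseteq H_1\supseteq\cdots$: the cycle chosen at step $i$ uses the edge $e_i$ deleted in passing to $H_{i+1}$, whereas every cycle chosen at a later step $j>i$ lives in $H_j\subseteq H_{i+1}$ and hence avoids $e_i$; so no two chosen cycles can coincide. Thus the at least $e-100k\,n^{1+\frac{1}{k}}$ steps yield that many distinct copies of $C_{2k}$ in $H$, and combining with the counting identity of the first paragraph gives $\frac{1}{4k}|\mathcal{C}_{d,2k}(B)|\geq|\mathcal{G}_d(B)|-100k|B|^{1+\frac{1}{k}}$. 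The only point demanding care is this distinctness bookkeeping in the peeling argument; everything else is a direct translation or a single application of the cited extremal bound. I note that when the right-hand side is negative---as it is, for instance, when $k=1$, since then $|\mathcal{G}_d(B)|\leq\binom{n}{2}<100n^2$---there is nothing to prove, so I may freely restrict to $k\geq 2$, where $C_{2k}$ is a genuine cycle and the Bondy--Simonovits theorem applies.
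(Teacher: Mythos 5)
Your proof is correct and takes essentially the same route as the paper: the same graph on vertex set $B$, the same $4k$-to-one correspondence between tuples of $\mathcal{C}_{d,2k}(B)$ and unlabelled copies of $C_{2k}$, and the Bondy--Simonovits theorem combined with edge deletion. The only difference is bookkeeping---the paper deletes one edge from \emph{every} $2k$-cycle simultaneously and applies Bondy--Simonovits once to the resulting $C_{2k}$-free graph, whereas you peel edges one at a time, applying the theorem at each step---and both implementations give the identical bound.
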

\begin{proof} Form a graph on the vertex set $B$, by connecting two distinct vertices $b_1,b_2\in B$ if and only if 
$\frac{b_1}{b_2}\in R_d$. This graph obviously has $|B|$ vertices, 
$|\mathcal{G}_{d}(B)|$ edges, and at most $\frac{1}{2\cdot 2k}\left\lvert \mathcal{C}_{d,2k}(B)\right\rvert$ 
simple cycles of length $2k$. 
Now remove an edge from every simple cycle of length $2k$ in this graph. 
We get a graph with $|B|$ vertices and at least 
$$|\mathcal{G}_{d}(B)|-\frac{1}{4k}\left\lvert \mathcal{C}_{d,2k}(B)\right\rvert$$ edges.
The resulting graph has no simple cycle of length $2k$. The result now follows 
directly from the Bondy-Simonovits Theorem stated above.
\end{proof}

Next, we use a standard probabilistic argument to enhance the lower bound of 
Lemma \ref{basic-lower}.

\begin{lem}\label{adv-lower}
For any positive integers $k$ and $d$, and for any finite set $B$ of positive real numbers
such that $|B|>d^{1+\frac{1}{2(k-1)}}$, we have the following inequality:
\begin{equation*}\left\lvert\mathcal{G}_d(B)\right\rvert<\frac{1}{4k}\left\lvert\mathcal{C}_{d,2k}(B)\right\rvert d^{-(2k-1)}+200k\, n^{1+\frac{1}{k}}d^{1-\frac{1}{2k}}.\end{equation*}
\end{lem}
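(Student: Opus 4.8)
The plan is to derive the enhanced bound from Lemma~\ref{basic-lower} by a random sparsification argument. Assume $k\ge 2$ (as is forced by the hypothesis, since otherwise the exponent $1+\frac{1}{2(k-1)}$ is undefined), and set
\[
p=d^{-\frac{2k-1}{2(k-1)}}.
\]
Let $B_p\subseteq B$ be the random subset obtained by keeping each element of $B$ independently with probability $p$. Since Lemma~\ref{basic-lower} holds for \emph{every} finite subset of $(0,\infty)$, it applies in particular to $B_p$, so that $\frac{1}{4k}|\mathcal{C}_{d,2k}(B_p)|\ge|\mathcal{G}_d(B_p)|-100k|B_p|^{1+\frac1k}$ holds pointwise (for every outcome). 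I would then take expectations of both sides and use linearity.

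Because each edge counted by $\mathcal{G}_d(B)$ involves two distinct elements, and each tuple counted by $\mathcal{C}_{d,2k}(B)$ has $2k$ pairwise distinct entries (as required in its definition), an edge survives in $B_p$ with probability $p^2$ and such a $2k$-tuple survives with probability $p^{2k}$. Hence $\mathbf{E}|\mathcal{G}_d(B_p)|=p^2|\mathcal{G}_d(B)|$ and $\mathbf{E}|\mathcal{C}_{d,2k}(B_p)|=p^{2k}|\mathcal{C}_{d,2k}(B)|$. Substituting these into the expectation of the pointwise inequality and dividing through by $p^2$ yields
\[
|\mathcal{G}_d(B)|\le \frac{1}{4k}\,p^{2k-2}\,|\mathcal{C}_{d,2k}(B)|+100k\,p^{-2}\,\mathbf{E}|B_p|^{1+\frac1k}.
\]
The whole point of the choice of $p$ is that $p^{2k-2}=d^{-(2k-1)}$, so the first term already has exactly the shape demanded by the statement; it remains only to control the error term.

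The main work, and the only delicate point, is the estimate of $\mathbf{E}|B_p|^{1+\frac1k}$. Here $|B_p|$ is binomial with mean $p\,|B|$, and the hypothesis $|B|>d^{1+\frac{1}{2(k-1)}}=p^{-1}$ says precisely that this mean exceeds $1$. In that regime the second moment obeys $\mathbf{E}|B_p|^2=p|B|(1-p)+(p|B|)^2<2(p|B|)^2$, and since $1<1+\frac1k\le 2$ the power-mean (Jensen) inequality gives $\mathbf{E}|B_p|^{1+\frac1k}\le\bigl(\mathbf{E}|B_p|^2\bigr)^{\frac12(1+\frac1k)}<2\,(p|B|)^{1+\frac1k}$. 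Plugging this in, the error term is at most $200k\,p^{-1+\frac1k}|B|^{1+\frac1k}$, and a direct computation of the exponent gives $p^{-1+\frac1k}=d^{1-\frac{1}{2k}}$, producing exactly the term $200k\,|B|^{1+\frac1k}d^{1-\frac{1}{2k}}$ of the statement (with $n=|B|$). The strict inequality $p|B|>1$ supplied by the hypothesis makes the second-moment bound strict, which is what upgrades the conclusion to a strict inequality. I expect this moment estimate, together with the observation that the exponent hypothesis is exactly the condition making it available, to be the crux; the remainder is linearity of expectation and bookkeeping of exponents.
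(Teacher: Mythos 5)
Your proof is correct and follows essentially the same route as the paper's: the same random sparsification with the identical choice of $p$ (your $d^{-\frac{2k-1}{2(k-1)}}$ equals the paper's $d^{-1-\frac{1}{2(k-1)}}$), the same use of linearity of expectation on edges and $2k$-tuples, and the same second-moment plus power-mean (H\"older) estimate of $\mathbf{E}\lvert B_p\rvert^{1+\frac{1}{k}}$, with strictness coming from $p\lvert B\rvert>1$ exactly as in the paper. Your explicit remarks that $k\ge 2$ is forced and that the hypothesis $\lvert B\rvert>d^{1+\frac{1}{2(k-1)}}$ is precisely the condition $p\lvert B\rvert>1$ are accurate clarifications of points the paper leaves implicit.
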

\begin{proof} Let $p:=d^{-1-\frac{1}{2(k-1)}}$,  
and let $B_p$ be a random subset of $B$ obtained by choosing each element independently with probability $p$.
By Lemma \ref{basic-lower},
$$\frac{1}{4k}\left\lvert \mathcal{C}_{d,2k}(B_p)\right\rvert\geq
|\mathcal{G}_{d}(B_{p})|-100k\,\lvert B_p\rvert^{1+\frac{1}{k}}.$$

Taking expectations, we get
\begin{equation}\label{E}\frac{1}{4k}E\left\lvert \mathcal{C}_{d,2k}(B_p)\right\rvert\geq E|\mathcal{G}_{d}(B_p)|-100k\, E\left(\lvert B_p\rvert^{1+\frac{1}{k}}\right).\end{equation}
Notice that from the linearity of expectation we have: 
\begin{equation}\label{left}E\left\lvert \mathcal{C}_{d,2k}(B_p)\right\rvert=\left\lvert \mathcal{C}_{d,2k}(B)\right\rvert p^{2k}
\end{equation}
and
\begin{equation}\label{first-term}
E|\mathcal{G}_{d}(B_p)|=|\mathcal{G}_{d}(B)|p^2.
\end{equation}
As for $E\left(\lvert B_p\rvert^{1+\frac{1}{k}}\right)$, note that $E|B_p|=|B| p$ and $V|B_p|=|B| p(1-p)$. Therefore, since $1<|B| p$,
$$
E\left(|B_p|^2\right)=V|B_p|+\left(E|B_p|\right)^2=
|B| p(1-p)+(|B| p)^2<2|B|^2p^2.
$$
Now, by Holder's Inequality,
\begin{equation}\label{second-term}E\left(\lvert B_p\rvert^{1+\frac{1}{k}}\right)\leq\left(E\left(\lvert B_p\rvert^2\right)\right)^{\frac{1}{2}+\frac{1}{2k}}<2 |B|^{1+\frac{1}{k}}p^{1+\frac{1}{k}}.\end{equation}
Plugging \eqref{left}, \eqref{first-term} and \eqref{second-term} in \eqref{E} we get
$$
\frac{1}{4k}\left\lvert \mathcal{C}_{d,2k}(B)\right\rvert p^{2k}>
|\mathcal{G}_{d}(B)|p^2-200k|B|^{1+\frac{1}{k}}p^{1+\frac{1}{k}},
$$
hence
\begin{equation*}
\left\lvert\mathcal{G}_d(B)\right\rvert<\frac{1}{4k}\left\lvert \mathcal{C}_{d,2k}(B)\right\rvert p^{2k-2}+200k\, |B|^{1+\frac{1}{k}}p^{-1+\frac{1}{k}}=\frac{1}{4k}\left\lvert 
\mathcal{C}_{d,2k}(B)\right\rvert d^{-(2k-1)}+200k\, n^{1+\frac{1}{k}}d^{1-\frac{1}{2k}}.\qedhere\end{equation*}
\end{proof}

We now approach the task of bounding $|\mathcal{C}_{d,2k}(B)|$ from above. 
We start with the following well known number-theoretic bound on the number of 
divisors $d(m)$ of an integer $m$.
\begin{lem}\label{divisors} For any $\delta>0$ there is a positive constant $c_3(\delta)$ depending only on $\delta$, such that for any positive integer $m$,
$$d(m)<c_3(\delta)m^{\delta}.$$ 
\end{lem}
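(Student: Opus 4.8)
The plan is to exploit the multiplicativity of both $d(m)$ and the power function $m^\delta$. Writing the prime factorization $m=\prod_i p_i^{a_i}$, we have $d(m)=\prod_i(a_i+1)$ and $m^\delta=\prod_i p_i^{\delta a_i}$, so that
$$\frac{d(m)}{m^\delta}=\prod_i\frac{a_i+1}{p_i^{\delta a_i}}.$$
It therefore suffices to bound this product by a constant depending only on $\delta$. I would bound each local factor $\frac{a+1}{p^{\delta a}}$ separately, splitting the primes according to whether $p$ is large or small relative to the threshold $2^{1/\delta}$.

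For the large primes, those satisfying $p\geq 2^{1/\delta}$ (equivalently $p^\delta\geq 2$), one has $p^{\delta a}\geq 2^a\geq a+1$ for every integer $a\geq 0$, the last inequality being an immediate consequence of the binomial expansion $2^a=(1+1)^a\geq 1+a$. Hence every such local factor is at most $1$ and contributes nothing to the product. For the small primes, those with $p<2^{1/\delta}$, there are only finitely many of them, and for each fixed such $p$ the quantity $\frac{a+1}{p^{\delta a}}$ attains a finite maximum $M_p$ over all integers $a\geq 0$, since the exponential $p^{\delta a}$ eventually dominates the linear $a+1$. Each $M_p$ is at least $1$, as is seen by taking $a=0$.

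Setting $c_3(\delta):=2\prod_{p<2^{1/\delta}}M_p$, a finite product depending only on $\delta$, we then obtain
$$\frac{d(m)}{m^\delta}\leq\prod_{p<2^{1/\delta}}M_p=\tfrac12\,c_3(\delta)<c_3(\delta),$$
which is the desired strict bound, uniform in $m$.

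The argument is entirely routine and I do not anticipate any serious obstacle; this is the classical fact that the divisor function grows more slowly than any fixed positive power. The only point requiring a moment's care is to verify that both the number of small primes and each maximum $M_p$ depend on $\delta$ alone and not on $m$, so that the resulting constant is genuinely uniform in $m$; this is immediate from the fact that the split was made at the fixed threshold $2^{1/\delta}$.
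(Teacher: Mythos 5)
Your proof is correct and follows essentially the same route as the paper's: both exploit the multiplicativity of $d(m)$ and $m^\delta$, split the primes at a threshold depending only on $\delta$ (you use $2^{1/\delta}$, the paper uses $e^{1/\delta}$), show that large primes contribute local factors at most $1$, and absorb the finitely many small primes into a constant independent of $m$. The only cosmetic difference is that you bound the small-prime factors by an implicit maximum $M_p$ while the paper gives the explicit bound $\frac{1}{\delta\ln p}$ via $e^x>1+x$; both are valid and yield the same uniformity in $m$.
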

\begin{proof} Let $m=\prod_{i=1}^kp_i^{r_i}$ be the prime factorization of $m$. $m$ has $d(m)=\prod_{i=1}^k(1+r_i)$ divisors. For any $1\leq i\leq k$, 
$$\left(p_i^{r_i}\right)^{\delta}=e^{\delta r_i\ln p_i}> 1+\delta r_i\ln p_i.$$
Therefore,
$$\frac{d(m)}{m^{\delta}}=\prod_{i=1}^k\frac{1+r_i}{\left(p_i^{r_i}\right)^{\delta}}<\prod_{i=1}^k\frac{1+r_i}{1+\delta r_i\ln p_i}\leq\prod_{i=1}^k\frac{1}{\min\{1,\delta\ln p_i\}}=\prod_{\substack{1\leq i\leq k\\\ln p_i\leq 1/\delta}}\frac{1}{\delta\ln p_i}\leq\prod_{\substack{p \text{ prime}\\p\leq e^{1/\delta}}}\frac{1}{\delta\ln p}.$$
Hence, $d(m)<c_3(\delta)m^{\delta}$, 
where $c_3(\delta):=\displaystyle\prod_{\substack{p \text{ prime}\\p\leq e^{1/\delta}}}\frac{1}{\delta\ln p}$.
\end{proof}

\begin{lem}\label{cor-upper}  For any positive integer $k$ there is a positive constant $c_4(k)$, depending only on $k$, such that for any positive integer $d$ and any finite set $B$ of positive real numbers we have
$$\lvert \mathcal{C}_{d,2k}(B)\rvert<c_4(k)\,|B|\,d^{\,2k+\frac{1}{4k}}.$$
\end{lem}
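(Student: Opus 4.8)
The plan is to exploit the telescoping structure of the ratios in $\mathcal{C}_{d,2k}(B)$. For any tuple $(b_1,\ldots,b_{2k})$ counted by $\mathcal{C}_{d,2k}(B)$, the cyclic product of consecutive ratios satisfies $\frac{b_1}{b_2}\cdot\frac{b_2}{b_3}\cdots\frac{b_{2k}}{b_1}=1$. Hence, once we fix $b_1\in B$ together with the ratio sequence $(r_1,\ldots,r_{2k})\in R_d^{2k}$, where $r_i$ is the $i$-th consecutive ratio, the entire tuple is determined, so the assignment $(b_1,\ldots,b_{2k})\mapsto\bigl(b_1,(r_1,\ldots,r_{2k})\bigr)$ is injective. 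This shows $\lvert\mathcal{C}_{d,2k}(B)\rvert\leq\lvert B\rvert\cdot N$, where $N$ is the number of sequences $(r_1,\ldots,r_{2k})\in R_d^{2k}$ whose product equals $1$. It therefore suffices to prove the $B$-free bound $N<c_4(k)\,d^{2k+\frac{1}{4k}}$.

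To bound $N$, I would pass from rationals to integer representations. Writing each $r_i\in R_d$ in lowest terms as $r_i=p_i/q_i$ with $p_i,q_i\in[d]$ gives an injection from the sequences counted by $N$ into the set of $(p_1,\ldots,p_{2k},q_1,\ldots,q_{2k})\in[d]^{4k}$ satisfying $\prod_i p_i=\prod_i q_i$; hence $N$ is at most the cardinality $M$ of the latter set. Grouping these integer tuples according to the common value $m=\prod_i p_i=\prod_i q_i$, which ranges over $1\leq m\leq d^{2k}$, and letting $\tau(m)$ denote the number of ordered factorizations of $m$ into $2k$ factors each lying in $[d]$, we get $M=\sum_m\tau(m)^2\leq\bigl(\max_m\tau(m)\bigr)\sum_m\tau(m)$. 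The second factor is exactly the total number of tuples $(p_1,\ldots,p_{2k})\in[d]^{2k}$, namely $d^{2k}$.

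The crux is then to bound $\max_m\tau(m)$ by a small power of $d$. Since $\tau(m)$ is at most the $2k$-fold ordered divisor function $d_{2k}(m)$, and $d_{2k}(m)\leq d(m)^{2k-1}$ (comparing, on each prime power, $\binom{e+2k-1}{2k-1}=\prod_{j=1}^{2k-1}\frac{e+j}{j}\leq(e+1)^{2k-1}$), Lemma \ref{divisors} applied with a sufficiently small $\delta=\delta(k)$ yields $d_{2k}(m)\leq d(m)^{2k-1}<c_3(\delta)^{2k-1}m^{(2k-1)\delta}$. Choosing $\delta$ so that $2k(2k-1)\delta\leq\frac{1}{4k}$ (recall $m\leq d^{2k}$) gives $\max_m\tau(m)\leq c(k)\,d^{\frac{1}{4k}}$. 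Combining, $M\leq c(k)\,d^{\frac{1}{4k}}\cdot d^{2k}=c(k)\,d^{2k+\frac{1}{4k}}$, and therefore $\lvert\mathcal{C}_{d,2k}(B)\rvert\leq\lvert B\rvert\,N\leq c_4(k)\,\lvert B\rvert\,d^{2k+\frac{1}{4k}}$, as desired.

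I expect the main obstacle to be the divisor estimate in the final step: one must control the $2k$-fold divisor function uniformly over $m\leq d^{2k}$ and track the dependence on $k$ so that the exponent saving $\frac{1}{4k}$ comes out correctly while keeping the constant dependent only on $k$. The reduction $N\leq M$ also deserves care, since it is precisely the reduced-form representation that makes the map injective while keeping all numerators and denominators in $[d]$.
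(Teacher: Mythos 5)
Your proof is correct, and its skeleton matches the paper's: drop the distinctness conditions, observe that a cycle is determined by $b_1$ together with its sequence of consecutive ratios, pass to pairs of tuples in $[d]^{2k}$ with equal products, group by the common product $m\leq d^{2k}$, and finish with the divisor bound of Lemma \ref{divisors}. Where you genuinely diverge is in the final counting step, and your version is the more careful one. The paper bounds $\sum_{m=1}^{d^{2k}}\tau(m)^2$ directly by $\sum_{m=1}^{d^{2k}}d(m)^{2k}$ (in its notation, the square of the number of ordered factorizations of $m$ into $2k$ factors from $[d]$ is bounded by $d(m)^{2k}$), which amounts to the pointwise estimate $\tau(m)\leq d(m)^{k}$; that estimate is not justified there and is false in general -- already for $k=2$, taking $m=2^{a}$ and $d=2^{a/2}$, the number of factorizations grows like $a^{3}/12$ while $d(m)^{2}=(a+1)^{2}$. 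The honest pointwise bound is $\tau(m)\leq d(m)^{2k-1}$, and inserting it into the paper's chain with the paper's choice $\delta=1/16k^{3}$ only yields the exponent $2k+\frac{2k-1}{4k^{2}}$, which exceeds the claimed $2k+\frac{1}{4k}$ for $k\geq 2$ (this is harmless downstream, since Proposition \ref{g} only needs some exponent strictly below $2k+\frac{1}{2k}$, but the chain as written does not deliver the stated bound). Your decomposition $\sum_{m}\tau(m)^{2}\leq\bigl(\max_{m}\tau(m)\bigr)\sum_{m}\tau(m)$, combined with the exact identity $\sum_{m}\tau(m)=d^{2k}$, the bound $\tau(m)\leq d_{2k}(m)\leq d(m)^{2k-1}$, and a choice of $\delta$ with $2k(2k-1)\delta\leq\frac{1}{4k}$, gives exactly the stated exponent with full rigor; your reduced-form injection for $N\leq M$ also makes explicit a point the paper elides. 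So your route not only works but quietly repairs a gap in the original argument.
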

\begin{proof}
We notice that
\begin{eqnarray}
\lvert\mathcal{C}_{d,2k}(B)\rvert& \leq &
\left\lvert\left\{(b_1,b_2,\ldots,b_{2k})\in B^{2k} \mid 
\forall 1\leq i\leq {2k}-1: \frac{b_i}{b_{i+1}}\in R_d, 
~\frac{b_{2k}}{b_1}\in R_d\right\}\right\rvert\leq\nonumber\\
&\leq &|B|\cdot\left\lvert\left\{(r_1,r_2,\ldots,r_{2k}) \mid 
\forall 1\leq i\leq {2k}: 
r_i\in R_d, ~r_1r_2\cdots r_{2k}=1\right\}\right\rvert\leq\nonumber\\
&\leq&  |B|\cdot\left\lvert\left\{((p_1,p_2,\ldots,p_{2k}),
(q_1,q_2,\ldots,q_{2k}))\in\left([d]^{2k}\right)^2 \mid 
p_1p_2\cdots p_{2k}=q_1q_2\cdots q_{2k}\right\}\right\rvert\leq\nonumber\\
&\leq& |B|\cdot\sum_{m=1}^{d^{2k}}\left\lvert\left\{(p_1,p_2,\ldots,p_{2k})\in [d]^{2k} \mid p_1p_2\cdots p_{2k}=m\right\}\right\rvert^2\leq |B|\cdot\sum_{m=1}^{d^{2k}}{d(m)}^{2k}.\nonumber
\end{eqnarray}
By Lemma \ref{divisors}, $d(m)<c_3(1/16k^3)m^{1/16k^3}$ for any $m$, and we get
\begin{equation*}\lvert\mathcal{C}_{d,2k}(B)\rvert<|B|\cdot\sum_{m=1}^{d^{2k}}\left(c_3(1/16k^3)m^{1/16k^3}\right)^{2k}\leq\left(c_3(1/16k^3)\right)^{2k}|B|\,d^{2k+\frac{1}{4k}}.
\end{equation*}
This completes the proof with $c_4(k):=\left(c_3(1/16k^3)\right)^{2k}$.
\end{proof}

We are now prepared for proving Proposition \ref{g}.
\begin{proof}[Proof of Proposition \ref{g}]

If $n\leq d^{1+\frac{1}{2(k-1)}}$, then \eqref{eq:g} holds because
$$
g_d(n)\leq\binom{n}{2}<n^2=n^{1+\frac{1}{k}}n^{1-\frac{1}{k}}\leq n^{1+\frac{1}{k}}\left(d^{1+\frac{1}{2(k-1)}}\right)^{1-\frac{1}{k}}=n^{1+\frac{1}{k}}d^{1-\frac{1}{2k}}.
$$

We therefore assume $n>d^{1+\frac{1}{2(k-1)}}$. Let $B$ be a set of $n$ positive real numbers. By Lemma \ref{adv-lower},
\begin{equation}\label{1}\left\lvert\mathcal{G}_d(B)\right\rvert 
<200k\, n^{1+\frac{1}{k}}d^{1-\frac{1}{2k}}+\frac{1}{4k}\left\lvert 
\mathcal{C}_{d,2k}(B)\right\rvert d^{-(2k-1)}.
\end{equation}
By Lemma \ref{cor-upper},
\begin{equation*}
\lvert \mathcal{C}_{d,2k}(B)\rvert<c_4(k) n\, d^{\,2k+\frac{1}{4k}}.
\end{equation*}
Hence, for $d\geq c_2(k):=\left(c_4(k)/4k\right)^{4k}$,
\begin{equation}\label{2}
\lvert \mathcal{C}_{d,2k}(B)\rvert<4k\, n\, d^{\,2k+\frac{1}{2k}}.
\end{equation}
Plugging \eqref{2} in \eqref{1} and using our assumption that 
$n>d^{1+\frac{1}{2(k-1)}}\geq d$, we get that for $d\geq c_2(k)$,
\begin{equation*}
\left\lvert\mathcal{G}_d(B)\right\rvert <
200k\, n^{1+\frac{1}{k}}d^{1-\frac{1}{2k}}+n\, d^{1+\frac{1}{2k}}<  (200k+1)n^{1+\frac{1}{k}} d^{1-\frac{1}{2k}}.\qedhere\end{equation*}
\end{proof}

\section{Bounded integer quotients}\label{sec:f}
\begin{defin} For positive integers $m,n,$ and $d$ define
$$f_d(m,n)=\max_{\substack{A,B\subset(0,\infty)\\|A|\leq m~,~|B|\leq n}}
\lvert\{(a,b)\in A\times B \mid \frac{a}{b}\in[d]\}\rvert.
$$
\end{defin} 
\begin{rem}\label{remark} It is an amusing exercise to see that 
$f_d(m,n)=f_d(n,m)$. Therefore, we may assume, if needed, with no loss of 
generality that $m\leq n$, or that $m\geq n$.
\end{rem}

\begin{propos}\label{f} For any $\varepsilon>0$ there is a positive constant 
$c_6(\varepsilon)$, depending only on $\varepsilon$, such that for any positive integers $m,n$, and $d$
$$
f_d(m,n)< c_5(\varepsilon)\left(\min\{\sqrt{n},\sqrt{m}\}\right)^{\varepsilon}\sqrt{m\, n\, d}.
$$
\end{propos}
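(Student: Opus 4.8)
The plan is to deduce the bound on $f_d(m,n)$ from the bound on $g_d$ in Proposition \ref{g} via a Cauchy--Schwarz (multiplicative energy) argument, combined with a layered summation that exploits the decay of $g_D(n)$ as a function of $D$. By Remark \ref{remark} I may assume $n\le m$, so that $\min\{\sqrt m,\sqrt n\}=\sqrt n$. Fix $A,B\subset(0,\infty)$ with $|A|=m$ and $|B|=n$, and let $I=\lvert\{(a,b)\in A\times B\mid a/b\in[d]\}\rvert$ be the quantity to bound. For $a\in A$ set $\deg(a)=\lvert\{b\in B\mid a/b\in[d]\}\rvert$, so that $I=\sum_{a\in A}\deg(a)$. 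Since for each fixed $b\in B$ there are at most $d$ elements $a$ with $a/b\in[d]$, we have the trivial bound $I\le nd$; if $m\ge nd$ this already gives $I\le nd\le\sqrt{mnd}$ and we are done, so from now on I assume $m<nd$.

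Applying Cauchy--Schwarz over $A$ gives $I^2\le m\sum_{a\in A}\deg(a)^2$. Expanding the square, $\sum_{a\in A}\deg(a)^2$ counts ordered triples $(a,b_1,b_2)$ with $a\in A$, $b_1,b_2\in B$ and $a/b_1,a/b_2\in[d]$; isolating the diagonal $b_1=b_2$ yields $\sum_{a\in A}\deg(a)^2=I+2\sum_{\{b_1,b_2\}}w(\{b_1,b_2\})$, where the sum runs over unordered pairs and $w(\{b_1,b_2\})=\lvert\{a\in A\mid a/b_1,a/b_2\in[d]\}\rvert$. The elementary point is that if $b_1/b_2=p/q$ in lowest terms and $P=\max\{p,q\}$, then any such $a$ makes $a/b_2$ and $a/b_1$ equal to $pt$ and $qt$ for a common positive integer $t$, forcing $Pt\le d$; hence $w(\{b_1,b_2\})\le\lfloor d/P\rfloor$, and in particular $w(\{b_1,b_2\})=0$ unless $\{b_1,b_2\}\in\mathcal{G}_d(B)$.

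The heart of the argument is to sum these weights without losing a full power of $d$ (the crude bound $w\le d$ would leave $d^{1-1/(4k)}$ rather than the required $d^{1/2}$). I use a layer-cake rearrangement: since $\lfloor d/P\rfloor\ge j$ precisely when $P\le\lfloor d/j\rfloor$, i.e.\ precisely when $\{b_1,b_2\}\in\mathcal{G}_{\lfloor d/j\rfloor}(B)$,
$$\sum_{\{b_1,b_2\}}w(\{b_1,b_2\})\le\sum_{\{b_1,b_2\}}\left\lfloor\frac{d}{P}\right\rfloor=\sum_{j=1}^{d}\bigl\lvert\mathcal{G}_{\lfloor d/j\rfloor}(B)\bigr\rvert\le\sum_{j=1}^{d}g_{\lfloor d/j\rfloor}(n).$$
I then estimate each term by Proposition \ref{g} when $\lfloor d/j\rfloor>c_2(k)$ and by the trivial bound \eqref{eq:g_easy_bound} otherwise. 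The former terms contribute at most $(200k+1)n^{1+\frac1k}\sum_{j=1}^{d}(d/j)^{1-\frac1{2k}}$, and since $\sum_{j=1}^{d}j^{-(1-\frac1{2k})}\le(2k+1)d^{\frac1{2k}}$ this collapses to $O_k\!\left(n^{1+\frac1k}d\right)$; the at most $d$ terms with $\lfloor d/j\rfloor\le c_2(k)$ contribute at most $c_2(k)^2nd=O_k\!\left(n^{1+\frac1k}d\right)$ as well. Thus $\sum_{\{b_1,b_2\}}w\le\tilde c(k)\,n^{1+\frac1k}d$ for some constant $\tilde c(k)$.

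Combining, $I^2\le mI+2\tilde c(k)\,m\,n^{1+\frac1k}d$; solving this quadratic and using $m<nd$ (so that $m\le\sqrt{m\cdot nd}\le\sqrt{m\,n^{1+\frac1k}d}$) gives $I\le c(k)\,n^{\frac1{2k}}\sqrt{mnd}$. Choosing $k=\lceil1/\varepsilon\rceil$ makes $n^{\frac1{2k}}=(\sqrt n)^{1/k}\le(\sqrt n)^{\varepsilon}=(\min\{\sqrt m,\sqrt n\})^{\varepsilon}$, which is exactly the claimed bound with $c_5(\varepsilon)=c(\lceil1/\varepsilon\rceil)$. I expect the main obstacle to be the weighted summation of the third paragraph: the sub-$d$ saving in Proposition \ref{g} is essential, because only after the layer-cake rearrangement does it just barely beat the harmonic sum $\sum_j(d/j)^{1-1/(2k)}\approx d$ and leave a single factor $d^{1/2}$ to survive the Cauchy--Schwarz.
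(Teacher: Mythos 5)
Your proof is correct and follows essentially the same route as the paper: the same reductions ($n\le m$, $m<nd$), the same double-counted quantity (your $\sum_{\{b_1,b_2\}}w(\{b_1,b_2\})$ is exactly the paper's $|W|$), the same input from Proposition \ref{g} together with the trivial bound \eqref{eq:g_easy_bound}, and the same concluding quadratic. The only cosmetic differences are that you phrase the second-moment step as Cauchy--Schwarz where the paper uses convexity of $\binom{x}{2}$ (the identical inequality), and you organize the weighted sum via the layer-cake identity $\sum_{j}\lvert\mathcal{G}_{\lfloor d/j\rfloor}(B)\rvert$ where the paper uses Abel summation over denominators --- equivalent bookkeeping for the same estimate.
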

\begin{proof} With no loss of generality (see Remark \ref{remark}) assume that $n\leq m$. We may also assume that $m< n\,d$ , because if $m\geq n\, d$, 
then $f_d(m,n)\leq n\, d=\sqrt{(n\, d) n\, d}\leq\sqrt{m\, n\, d}$ (see Proposition \ref{three} for further discussion).

Let $A$ and $B$ be finite sets of positive real numbers such that $|A|\leq m,|B|\leq n$. 
The proposition will follow by comparing lower and upper bounds for the cardinality of the set
$$
W=\{(a,b_1,b_2) \in A\times B^2 \mid 
\frac{a}{b_1},\frac{a}{b_2}\in [d], ~~b_1<b_2\}.
$$

We first establish an upper bound for $|W|$. For convenience define
$$
S_d=\{(p,q) \mid p,q\in[d],~~p<q,~~\gcd(p,q)=1\}.
$$ 

We have:

\begin{eqnarray}
|W| &= &\left\lvert\left\{(a,b_1,b_2)\in A\times B^2 \mid 
\frac{a}{b_1},\frac{a}{b_2}\in [d],~~b_1<b_2\right\}\right\rvert=\nonumber\\
 &=& \left\lvert\left\{(b_1,b_2,k_1,k_2)\in B^2\times [d]^2 \mid 
k_1b_1=k_2b_2\in A,~~b_1<b_2\right\}\right\rvert\leq\nonumber\\
 &\leq& \left\lvert\left\{(b_1,b_2,k_1,k_2)\in B^2\times [d]^2 \mid
k_1b_1=k_2b_2,~~b_1<b_2\right\}\right\rvert=\nonumber\\
&=& \sum_{(p,q)\in S_d}\left\lvert\left\{(b_1,b_2,k_1,k_2)\in B^2\times [d]^2 \mid 
\frac{b_1}{b_2}=\frac{k_2}{k_1}=\frac{p}{q}\right\}\right\rvert=\nonumber\\
&=& \sum_{(p,q)\in S_d}\left\lvert\left\{(b_1,b_2)\in B^2 \mid 
\frac{b_1}{b_2}=\frac{p}{q}\right\}\right\rvert\cdot
\left\lvert\left\{(k_1,k_2)\in[d]^2 \mid 
\frac{k_2}{k_1}=\frac{p}{q}\right\}\right\rvert=\nonumber\\
&=&\sum_{(p,q)\in S_d}\left\lvert\left\{(b_1,b_2)\in B^2 \mid
\frac{b_1}{b_2}=\frac{p}{q}\right\}\right\rvert\cdot
\lfloor\frac{d}{q}\rfloor=\sum_{q=2}^d\left(|\mathcal{G}_{q}(B)|-|\mathcal{G}_{q-1}(B)|\right)\lfloor\frac{d}{q}\rfloor\leq\nonumber\\
&\leq&\sum_{q=2}^d\left(|\mathcal{G}_{q}(B)|-|\mathcal{G}_{q-1}(B)|\right)\frac{d}{q}=|\mathcal{G}_d(B)|+
\sum_{q=2}^{d-1}|\mathcal{G}_q(B)|\left(\frac{d}{q}-\frac{d}{q+1}\right)=\nonumber\\
&=&|\mathcal{G}_d(B)|+\sum_{q=2}^{d-1}|\mathcal{G}_q(B)|\frac{d}{q(q+1)}.\nonumber
\end{eqnarray}

 Let $k:=\max\{\lceil1/\varepsilon\rceil,1\}$. By Proposition \ref{g}, there is a positive constant $c_2(k)$, depending
only on $k$, such that for any $c_2(k)<q\leq d$,
\begin{equation*}\label{larged}
|\mathcal{G}_q(B)|<(200k+1) n^{1+\frac{1}{k}}q^{1-\frac{1}{2k}}.
\end{equation*}
For any $q$, we have by \eqref{eq:g_easy_bound} that $|\mathcal{G}_q(B)|<n\, q^2$.

Therefore, 
\begin{eqnarray}
|W| &\leq& |\mathcal{G}_d(B)|+\sum_{q=2}^{c_2(k)}|\mathcal{G}_q(B)|\frac{d}{q(q+1)}+
\sum_{q=c_2(k)+1}^{d-1}|\mathcal{G}_q(B)|\frac{d}{q(q+1)}<\nonumber\\
&<& (200k+1)n^{1+\frac{1}{k}}d^{1-\frac{1}{2k}}+
\sum_{q=2}^{c_2(k)}n\, q^2\frac{d}{q(q+1)}+
\sum_{q=c_2(k)+1}^{d-1}(200k+1) n^{1+\frac{1}{k}}q^{1-\frac{1}{2k}}\frac{d}{q(q+1)}
\leq\nonumber\\
&\leq& (200k+1) n^{1+\varepsilon}d^{1-\frac{1}{2k}}+(c_2(k)-1) n\, d+(200k+1) 
n^{1+\varepsilon} d\sum_{q=c_2(k)+1}^{d-1}\frac{1}{q^{1+\frac{1}{2k}}}.\nonumber
\end{eqnarray}

Hence,
\begin{equation}\label{upper}
|W|< c(\varepsilon)n^{1+\varepsilon}d,
\end{equation}
where $\displaystyle c(\varepsilon):=(200k+1)+(c_2(k)-1)+
(200k+1)\sum_{q=c_2(k)+1}^{\infty}\frac{1}{q^{1+\frac{1}{2k}}}$.

\medskip

To get a lower bound for $|W|$, we define $d(a)=\lvert\{b\in B \mid \frac{a}{b}\in[d]\}\rvert$ for any $a\in A$. Then, by the convexity of the function $\binom{x}{2}=\frac{x(x-1)}{2}$:
\begin{equation}\label{lower}
|W|=\sum_{a\in A}\binom{d(a)}{2}\geq m\binom{\frac{1}{m}\sum_{a\in A}d(a)}{2}.
\end{equation}

Combining the upper and lower bounds for $|W|$, namely,
\eqref{upper} and \eqref{lower}, we get 
$$
m\binom{\frac{1}{m}\sum_{a\in A}d(a)}{2}< c(\varepsilon) n^{1+\varepsilon}d.
$$

Now, we deduce
$$
\lvert\{(a,b)\in A\times B \mid \frac{a}{b}\in[d]\}\rvert=
\sum_{a\in A}d(a)<\frac{m}{2}+\sqrt{\frac{m^2}{4}+
2c(\varepsilon) m\, n^{1+\varepsilon} d.}
$$
This implies the desired result, as $n\leq m<n\, d$.
\end{proof}

\subsection{Tightness of Proposition \ref{f}}\label{tightness}

In this section we will show that the upper bound in 
Proposition \ref{f} for $f_d(n,m)$ is essentially tight (see Proposition 
\ref{lowerg} below), provided that none of 
the parameters $m,n$, and $d$ is much larger than the product of the other two.
When one of $m,n$, and $d$ is considerably larger than the product of the 
other two, the upper bound in Proposition \ref{f} is no longer tight, 
as follows from Proposition \ref{three} below, 
in which the exact values of $f_d(m,n)$ in those cases are determined. 

\begin{propos}\label{lowerg} If $m\leq 4nd, ~n\leq 4md$, and 
$d\leq 4mn$, then $f_d(m,n)\geq\frac{1}{8}\sqrt{m\, n\, d}$.
\end{propos}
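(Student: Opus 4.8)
The plan is to exhibit an explicit construction of sets $A$ and $B$ witnessing the lower bound, tailored to which of the three parameters $m,n,d$ is the middling one. Since $f_d(m,n)=f_d(n,m)$ by Remark \ref{remark}, assume without loss of generality that $m\le n$. The three constraints $m\le 4nd$, $n\le 4md$, and $d\le 4mn$ are a balance condition ensuring no single parameter dwarfs the product of the other two; under our normalization $m\le n$, the binding constraints are $n\le 4md$ (controlling how large $n$ can be) and $d\le 4mn$ (controlling how large $d$ can be).

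The natural construction is to build $B$ as a generalized arithmetic-progression-like grid and $A$ as a set of products, so that many ratios $a/b$ land in $[d]$. Concretely, I would take $B=\{i : i\in[s]\}$ for a suitable integer $s\approx\min\{n,\sqrt{nd},\ldots\}$ and $A$ to be a set of integers each of which is a multiple of many elements of $B$ by factors in $[d]$; the count $|W|$ or directly $|\{(a,b):a/b\in[d]\}|$ then reduces to counting lattice points $(b,k)\in B\times[d]$ with $bk\in A$. The cleanest realization is $B=[s_B]$, $A=[s_A]$ with $s_A\approx s_B\,d$, where $f_d$ restricted to these counts pairs $(a,b)\in[s_A]\times[s_B]$ with $a/b\in[d]$, i.e. $a=kb$ for some $k\in[d]$; this is roughly $\sum_{b\le s_B}\min\{d,s_A/b\}$. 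Choosing the parameters $s_A,s_B$ subject to $s_A\le m$, $s_B\le n$ so that the product $s_A s_B d$ (or the relevant truncated sum) is maximized under the three constraints yields a count of order $\sqrt{mnd}$, and carefully tracking constants gives the factor $\tfrac18$.

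I would split into the natural regimes dictated by which constraint is active: when $d$ is the smallest parameter one takes $A,B$ both of size about $\sqrt{mn/d}\cdot\sqrt d$ scaled appropriately so that essentially every pair $(b,k)\in B\times[d]$ contributes; when $m$ (the smaller of $m,n$) is the smallest, one saturates $A=[m']$ with $m'\approx m$ and spreads $B$ so each $a$ has about $\sqrt{nd/m}$ valid partners; and the intermediate regime interpolates. In each case the answer is $\Theta(\sqrt{mnd})$ and the three hypotheses are exactly what guarantee the chosen parameter sizes respect $|A|\le m$, $|B|\le n$, and that the factors stay within $[d]$.

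The main obstacle I anticipate is organizing the case analysis so that a single clean choice of $A$ and $B$ (or a small number of choices) covers all parameter ranges permitted by the three inequalities, and then verifying that the crude constant bookkeeping never loses more than the allotted factor of $8$. In particular, the truncation in the sum $\sum_{b}\min\{d,s_A/b\}$ must be handled so that the dominant contribution is provably at least a constant fraction of $\sqrt{mnd}$; the balance conditions $m\le 4nd$, $n\le 4md$, $d\le 4mn$ are precisely the inputs that prevent the truncation from killing the bound, and the crux of the argument will be checking this truncation carefully in the worst regime.
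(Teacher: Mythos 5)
Your overall plan---an explicit construction whose size constraints are exactly what the three hypotheses guarantee---is the right kind of plan, but the concrete construction you commit to, namely intervals of integers $A=[s_A]$, $B=[s_B]$ with $s_A\approx s_B d$, cannot give $\Omega(\sqrt{m\,n\,d})$ in all regimes permitted by the hypotheses, and the regime where it fails is precisely the one your hedging leaves unresolved. Take $m=n$ and $d=n^2$ (allowed, since $d\le 4mn$); then $\frac18\sqrt{m\,n\,d}=\frac18 n^2$. For sets of integers, $\frac{a}{b}\in[d]$ forces $b\mid a$, so for intervals the count is at most $\sum_{a\le s_A}d(a)=O(s_A\log s_A)=O(n\log n)$, far below $\frac18 n^2$; and your preferred normalization $s_A\approx s_B d\le m$ is outright impossible here, since it forces $s_B\le m/d=1/n<1$. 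Your fallback for this regime (``saturate $A=[m']$ and spread $B$ so each $a$ has about $\sqrt{nd/m}$ valid partners'') is where all the content lies, and it is not supplied: for an integer $a$ to have many partners $b$ with $\frac{a}{b}\in[d]$, the set $B$ must contain fractions of the form $a/k$, which is incompatible with the integer grid $B=\{i: i\in[s]\}$ you start from. So the truncation issue you flag as the crux is not the real obstacle; the real obstacle is that interval-type sets are the wrong extremal objects altogether.

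For comparison, the paper's proof is a single construction with no case analysis, no summation, and no truncation. Set $k=\lfloor\sqrt{m\,d/n}\rfloor$, $\ell=\lfloor\sqrt{n\,d/m}\rfloor$, $t=\lfloor\sqrt{m\,n/d}\rfloor$, and take
$A=\left\{(k+\ell)^r\,i\right\}_{r\in[t],\,i\in[k]}$ and $B=\left\{(k+\ell)^r/j\right\}_{r\in[t],\,j\in[\ell]}$.
Then $|A|=tk\le m$ and $|B|=t\ell\le n$, and for every triple $(r,i,j)$ the pair $a=(k+\ell)^r i$, $b=(k+\ell)^r/j$ satisfies $\frac{a}{b}=ij\le k\ell\le d$, so the number of good pairs is at least $t\,k\,\ell\ge\frac12\sqrt{m\,n/d}\cdot\frac12\sqrt{m\,d/n}\cdot\frac12\sqrt{n\,d/m}=\frac18\sqrt{m\,n\,d}$; the three hypotheses enter only to control the loss from the three floor functions. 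The key idea you are missing is this multiplicative, multi-scale structure: ratios in $[d]$ are realized as products $ij$ of two small integers (with $B$ containing the ``harmonic'' elements $(k+\ell)^r/j$), rather than through divisibility inside an interval, and pairs from different scales $r\neq r'$ are simply never counted, so no cross-term bookkeeping is needed. In your hard regime $m=n$, $d=n^2$ this specializes to $t=1$, $A=\{2n\,i\}_{i\in[n]}$, $B=\{2n/j\}_{j\in[n]}$, where all $n^2$ pairs qualify---something no choice of integer intervals can approach.
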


\begin{proof} Set $k=\lfloor\sqrt{m\, d/n}\rfloor,
\ell=\lfloor\sqrt{n\,d/m}\rfloor$, and 
$t=\lfloor\sqrt{m\,n/d}\rfloor$. Consider the sets
$$
A=\left\{(k+\ell)^r\,i\right\}_{r\in[t], i\in[k]},\quad \mbox{and} \quad
B=\left\{(k+\ell)^r/j\right\}_{r\in[t],j\in[\ell]}.
$$

Then $$|A|=t\, k\leq\sqrt{\frac{m\, n}{d}}\cdot\sqrt{\frac{m\,d}{n}}=m ~~~\mbox{and}$$
$$|B|=t\,\ell\leq\sqrt{\frac{m\, n}{d}}\cdot\sqrt{\frac{n\,d}{m}}=n.$$

Notice that
$$
\left\lvert\left\{(a,b)\in A\times B \mid 
\frac{a}{b}\in[d]\right\}\right\rvert\geq t\, k\,\ell \geq 
\frac{1}{2}\sqrt{\frac{m\, n}{d}}\cdot\frac{1}{2}\sqrt{\frac{m\, d}{n}}\cdot\frac{1}{2}\sqrt{\frac{n\, d}{m}}=\frac{1}{8}\sqrt{m\, n\, d}.\qedhere
$$
\end{proof}

\medskip

\begin{propos}\label{three}
\begin{enumerate}
\item If $d\geq m\, n$ then $f_d(m,n)=m\, n$.
\item If $n\geq m\, d$ then $f_d(m,n)=m\, d$.
\item If $m\geq n\, d$ then $f_d(m,n)=n\, d$.
\end{enumerate}
\end{propos}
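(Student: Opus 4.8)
The plan is to prove, for all positive integers $m,n,d$, the three \emph{unconditional} upper bounds $f_d(m,n)\le mn$, $f_d(m,n)\le md$, and $f_d(m,n)\le nd$, and then to match each with an explicit construction under the corresponding hypothesis.

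The upper bounds are immediate counting arguments. For arbitrary $A,B$ with $|A|\le m$ and $|B|\le n$, the number of pairs $(a,b)\in A\times B$ is at most $|A|\,|B|\le mn$, giving $f_d(m,n)\le mn$. Next, fix $a\in A$; if $a/b\in[d]$ then $b\in\{a/1,a/2,\dots,a/d\}$, a set of at most $d$ positive reals, so each $a$ is paired with at most $d$ elements of $B$, whence $f_d(m,n)\le md$; the same argument applied to each $b\in B$ (or Remark \ref{remark}) gives $f_d(m,n)\le nd$. These three bounds already yield the ``$\le$'' direction in all three cases, since under $d\ge mn$ one has $mn\le md,nd$, under $n\ge md$ one has $md\le mn,nd$, and under $m\ge nd$ one has $nd\le mn,md$.

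For the matching lower bounds the crucial point is that $A$ and $B$ may consist of arbitrary positive reals rather than integers, which removes the divisibility obstruction one meets with honest integer progressions. For part (1) I would take $A=\{1,2,\dots,m\}$ and $B=\{1,\tfrac{1}{2},\dots,\tfrac{1}{n}\}$; then every quotient $a/b=ij$ with $i\in[m]$, $j\in[n]$ is an integer in $[1,mn]\subseteq[d]$, so all $mn$ pairs are counted and $f_d(m,n)\ge mn$. For part (2), where $n\ge md$, I would fix $C:=d+1$ and set $A=\{C^i : i\in[m]\}$ together with $B=\{C^i/k : i\in[m],\,k\in[d]\}$; each $a=C^i$ is then paired with precisely the $d$ elements $C^i/k$, $k\in[d]$, and the geometric spacing guarantees that for $i\neq i'$ the quotient $C^i k'/C^{i'}$ never lies in $[1,d]$ (it is either $\ge C>d$ or $<1$), so the $md$ elements of $B$ are distinct, $|B|=md\le n$, and no spurious pairs arise. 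This produces exactly $md$ counted pairs, so $f_d(m,n)\ge md$. Finally, part (3) follows at once from part (2) and the symmetry $f_d(m,n)=f_d(n,m)$ of Remark \ref{remark}: if $m\ge nd$ then $f_d(m,n)=f_d(n,m)=nd$.

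The only routine book-keeping is checking the distinctness of the constructed sets and the disjointness of the ratio-sets $\{C^i/k\}_k$ across different $i$; the single load-bearing idea is the passage to real (indeed rational) elements, which is exactly what makes the extremal configurations attainable. I expect no serious obstacle: the scale $C>d$ in part (2) is calibrated precisely so that the cross-ratios $C^{\pm(i-i')}k'$ escape the window $[1,d]$, and one need only verify this elementary inequality.
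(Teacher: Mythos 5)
Your proposal is correct and follows essentially the same route as the paper: the same unconditional counting bounds (each $a$ admits at most $d$ valid $b$'s, and vice versa) and the same extremal constructions, with your part (2) sets $A=\{(d+1)^i\}_{i\in[m]}$, $B=\{(d+1)^i/k\}_{i\in[m],k\in[d]}$ matching the paper's verbatim. The only cosmetic differences are that you derive part (3) from part (2) via the symmetry $f_d(m,n)=f_d(n,m)$ of Remark \ref{remark}, where the paper writes out the mirrored construction explicitly, and your part (1) example $A=[m]$, $B=\{1/j\}_{j\in[n]}$ is the paper's with the roles of $A$ and $B$ correctly oriented so that $a/b$ is a positive integer.
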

\begin{proof}\begin{enumerate}
\item For any $A,B\subset(0,\infty)$ with $|A|\leq m,|B|\leq n$ 
we obviously have $$\lvert\{(a,b)\in A\times B \mid 
\frac{a}{b}\in[d]\}\rvert\leq|A\times B|=|A|\cdot|B|\leq m\, n.$$
To see that this upper bound can actually be attained, 
consider, for instance, the sets $A=\{1/i\}_{i\in[m]}$ and $B=[n]$.
\item For any $A,B\subset(0,\infty)$ with $|A|\leq m,|B|\leq n$ we have 
$$
\lvert\{(a,b)\in A\times B \mid \frac{a}{b}\in[d]\}\rvert=
\lvert\{(a,k)\in A\times[d] \mid \frac{a}{k}\in B\}\rvert\leq m\, d.$$
This upper bound can indeed be attained, for example by taking
$A=\{(d+1)^i\}_{i\in[m]}$ and $B=\{(d+1)^i/k\}_{i\in[m],k\in[d]}$.
\item
For any $A,B\subset(0,\infty)$ with $|A|\leq m,|B|\leq n$ we have \
$$\lvert\{(a,b)\in A\times B \mid \frac{a}{b}\in[d]\}\rvert=
\lvert\{(b,k)\in B\times[d] \mid k\cdot b\in A\}\rvert\leq n\, d.$$
Equality is attained, for example, by taking 
$A=\{(d+1)^j k\}_{j\in[n],k\in[d]}$ and $B=\{(d+1)^j\}_{j\in[n]}$.
\end{enumerate}  
\end{proof}

\section{Union of arithmetic progressions}\label{sec:prog}

In this Section we prove Theorem \ref{thm:gen-prog}.

Recall that for integers $n>1$ and $\ell>1$,
$u_{\ell}(n)$ is the minimum possible cardinality of a union of $n$ 
arithmetic progressions, each of length $\ell$, with pairwise distinct
differences. 

As a consequence of Proposition \ref{f} we get the following easy lower bound for 
$u_{\ell}(n)$ that will be useful in the regime $\ell\leq n^{\frac{1}{2}-\varepsilon}$:

\begin{propos}\label{cor:gen-prog} For any $\varepsilon>0$ there is a positive constant $c_6(\varepsilon)$, depending only on $\varepsilon$, such that for any positive integers $n$ and $\ell$  
$$u_{\ell}(n)>c_6(\varepsilon) n^{\frac{1}{2}-\varepsilon}\ell.$$ 
\end{propos}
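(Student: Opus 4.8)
The plan is to derive Proposition~\ref{cor:gen-prog} as a direct consequence of the upper bound on $f_d(m,n)$ established in Proposition~\ref{f}. The key observation is a double-counting setup: given $n$ arithmetic progressions of length $\ell$ with pairwise distinct differences whose union $U$ is small, I want to show that $|U|$ cannot be too small by relating the progressions to pairs $(a,b)$ with bounded integer quotient. Let me set up the correspondence. Suppose the $j$-th progression is $P_j=\{s_j, s_j+t_j, s_j+2t_j,\ldots, s_j+(\ell-1)t_j\}$ with distinct common differences $t_j>0$. The natural quantity to count is the number of triples (progression index $j$, index pairs within the progression) or, more cleanly, pairs of elements of $U$ lying in a common progression.

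First I would count, for each progression $P_j$, the $\binom{\ell}{2}$ pairs of its elements; any two elements $s_j+it_j$ and $s_j+i't_j$ with $i<i'$ differ by $(i'-i)t_j$. The difference set of $P_j$ consists of multiples $t_j, 2t_j,\ldots,(\ell-1)t_j$. The plan is to encode each such ordered pair as a pair $(a,b)\in U\times U$ (or within a suitable $A\times B$) whose ratio, or whose associated quotient, is a bounded integer, so that Proposition~\ref{f} applies. Concretely, I expect to form sets $A$ and $B$ from $U$ (or from differences of elements of $U$) and to bound the total number of in-progression pairs, which is exactly $n\binom{\ell}{2}\sim \tfrac{1}{2}n\ell^2$, from above by something like $f_d(|U|,|U|)$ with $d$ on the order of $\ell$. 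Setting $m=n=|U|$ and $d=\ell$ in Proposition~\ref{f} gives an upper bound of roughly $|U|^{1+\varepsilon'}\sqrt{\ell}$ up to constants, and comparing $n\ell^2 \lesssim |U|^{1+\varepsilon'}\sqrt{\ell}$ and solving for $|U|$ should yield $|U|\gtrsim n^{\frac{1}{2}-\varepsilon}\ell$ after adjusting $\varepsilon'$.

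The main obstacle, and the step requiring care, is choosing the right sets $A,B\subset(0,\infty)$ so that the in-progression pairs map \emph{injectively} (or with bounded multiplicity) into the set counted by $f_d$, and so that the relevant ratios genuinely land in $[d]$. The difficulty is that the elements of a progression have differences that are \emph{multiples} of $t_j$, not the elements themselves, so I cannot directly take ratios of elements of $U$. I expect the fix is to translate each progression so that $s_j=0$ is not literal but to instead work with the ordered pairs $(s_j+it_j,\, s_j+i't_j)$ and exploit that $\frac{(s_j+i't_j)-s_j}{t_j}=i'\in[\ell]$; that is, the natural pair with a bounded integer quotient is (element minus base point, common difference), which suggests taking $A$ to be the set of differences $\{u-u' : u,u'\in U\}$ and $B$ the set of common differences, or some variant. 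Pinning down this encoding—ensuring distinct differences $t_j$ keep the map controlled and that $|A|,|B|\le \mathrm{poly}(|U|)$—is where the real content lies; once the injection is in place, the rest is an application of Proposition~\ref{f} and elementary algebra to isolate $|U|$.

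Finally I would absorb all multiplicative constants into $c_6(\varepsilon)$ and verify the regime assumptions of Proposition~\ref{f} (that no parameter dominates the product of the other two) are met in the range where the bound is stated to be useful, namely $\ell\le n^{\frac12-\varepsilon}$. If the direct identification of $A$ and $B$ with subsets of $U$ inflates the sizes beyond $|U|$, I would instead apply Proposition~\ref{f} with $m,n$ chosen as the actual cardinalities of the constructed sets and track the resulting exponent carefully, choosing the parameter $\varepsilon$ in Proposition~\ref{f} small enough (say $\varepsilon/2$) to compensate. I anticipate that the clean statement $u_\ell(n)>c_6(\varepsilon)n^{\frac12-\varepsilon}\ell$ will emerge precisely because the count $n\ell^2$ of in-progression pairs is quadratic in $\ell$ while $f_d$ grows only like $\sqrt{d}=\sqrt{\ell}$ in the difference parameter, forcing $|U|$ to be large.
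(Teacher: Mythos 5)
Your proposal has a genuine gap: the counting scheme you set up cannot work, and the missing ingredient is precisely the key idea of the paper's proof. Your plan is to count the $n\binom{\ell}{2}$ in-progression pairs and compare with $f_d(\lvert U\rvert,\lvert U\rvert)$ for $d\approx\ell$, hoping for $n\ell^2\lesssim \lvert U\rvert^{1+\varepsilon'}\sqrt{\ell}$. A quick sanity check shows this inequality cannot be obtainable: solving it would give $\lvert U\rvert\gtrsim (n\ell^{3/2})^{1-\varepsilon''}$, which for $\ell\le n^{1/2}$ exceeds $n\ell\ge u_\ell(n)$ — so no valid encoding of in-progression pairs into pairs counted by $f_d$ can exist. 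The concrete obstruction is multiplicity collapse: a pair of elements of $P_j$ is determined, as far as ratios are concerned, only by its difference $i\,t_j$, and each progression has merely $\ell-1$ distinct differences, so the $\binom{\ell}{2}$ pairs per progression collapse to at most $\ell-1$ distinct ratio-pairs. You are left with about $n\ell$ ratio-pairs, and with your choice $A=\{u-u'\mid u,u'\in U\}$, $B=\{t_j\}$, the best you can extract from Proposition \ref{f} is $\lvert A\rvert\gtrsim n^{1-\varepsilon}\ell$; since $A$ is a difference set, this only yields $\lvert U\rvert\ge\sqrt{\lvert A\rvert}\gtrsim n^{\frac12-\varepsilon}\sqrt{\ell}$, short of the claimed bound by a factor of $\sqrt{\ell}$. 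Your fallback remark about ``tracking the actual cardinalities'' runs into exactly this quadratic loss and does not repair it.

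What the paper does instead is a dichotomy that removes the quadratic loss by anchoring at a single point. Either every $x\in U$ lies in fewer than $\sqrt{n}$ progressions, in which case $n\ell<\lvert U\rvert\sqrt{n}$ immediately gives $\lvert U\rvert>\sqrt{n}\,\ell$; or some $x$ lies in at least $\sqrt{n}$ progressions, and then at least $\sqrt{n}/2$ of these have $d:=\lceil\frac{\ell-1}{2}\rceil$ terms on the same side of $x$. Taking $B$ to be the (distinct) differences of those progressions and $A=\{i\,b\mid i\in[d],\,b\in B\}$, one has $x+A\subseteq U$, so $\lvert A\rvert\le\lvert U\rvert$ with \emph{no} squaring, while the $d\,\lvert B\rvert$ pairs $(i b, b)$ all have ratio in $[d]$. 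Proposition \ref{f} then forces $\lvert U\rvert\ge\lvert A\rvert\gtrsim \lvert B\rvert^{1-\varepsilon}d\gtrsim n^{\frac12-\varepsilon}\ell$. The anchoring step — restricting to progressions through one heavily covered point so that the constructed set $A$ is a translate of a subset of $U$ — is the idea your proposal is missing, and without it the difference-set encoding provably falls a factor $\sqrt{\ell}$ short.
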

\begin{proof} Take $n$ arithmetic progressions, each of length $\ell$, with pairwise
distinct differences, and let $U$ be their union. If each $x\in U$ belongs to less than $\sqrt{n}$ of the progressions, then 
$n\,\ell<|U|\sqrt{n}$ 
and consequently $|U|> \sqrt{n}\,\ell> n^{\frac{1}{2}-\varepsilon}\ell$.

Therefore, assume there is $x\in U$ which belongs to at least $\sqrt{n}$ progressions. In any such progression at least $d:=\lceil\frac{\ell-1}{2}\rceil$ of the terms are on the same side of $x$ (that is, either come before or after). 
Therefore, in at least $\sqrt{n}/2$ progressions there are at least $d$ terms on the same side of $x$ and without loss of generality we assume they come after
$x$ in these progressions. We now concentrate only on these progressions.
Let $B$ be the set of differences of these arithmetic progressions, 
and let $A=\{i\,b \mid i\in[d], ~~b\in B\}$. 
Proposition \ref{f} implies
\begin{align*}
d\,|B|&=\lvert\{(a,b)\in A\times B \mid \frac{a}{b}\in[d]\}\rvert\leq f_d(|A|,|B|)<\\
&<c_5(\varepsilon)\left(\min\{\sqrt{|A|},\sqrt{|B|}\}\right)^{\varepsilon}\sqrt{|A|\,|B|\,d}\leq c_5(\varepsilon)\sqrt{|B|^{\varepsilon}}\cdot\sqrt{|A|\,|B|\,d},
\end{align*}
hence
\begin{equation*}|U|\geq |\{x+a\mid a\in A\}|=|A|>\frac{1}{{c_5(\varepsilon)}^2}|B|^{1-\varepsilon}d\geq\frac{1}{{c_5(\varepsilon)}^2}\left(\frac{\sqrt{n}}{2}\right)^{1-\varepsilon}\frac{\ell-1}{2}\geq \frac{1}{{c_5(\varepsilon)}^2\,2^{3-\varepsilon}} n^{\frac{1}{2}-\varepsilon}\ell.
\end{equation*}

This completes the proof with $\displaystyle c_6(\varepsilon):=\min\{1,\frac{1}{{c_5(\varepsilon)}^2\,2^{3-\varepsilon}}\}$.
\end{proof}

The lower bounds in Theorem \ref{thm:gen-prog} in the regime 
$n^{\frac{1}{2}-\varepsilon}\leq\ell$ are established in Proposition 
\ref{prop:gen-prog} below. The proof of Proposition \ref{prop:gen-prog} uses Proposition \ref{g}, ideas similar to those appearing in the proof of Proposition \ref{f}, and the following lemma (recall the definition of $R_{d}$ from Section
\ref{sec:g}).

\begin{lem}\label{intersection} Suppose that the arithmetic progressions 
$(a_1+(j-1)b_1)_{j=1}^{\ell}$ and $(a_2+(j-1)b_2)_{j=1}^{\ell}$ have at least $r\geq 2$ common elements, then $$\frac{b_1}{b_2}\in R_{\lfloor\frac{\ell-1}{r-1}\rfloor}.$$
\end{lem}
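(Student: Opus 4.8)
The plan is to reduce everything to a single quantity: the common difference of the sub-progression formed by the shared elements. First I would take any two of the $r\geq 2$ common elements; writing each of them as a term of the first progression and also as a term of the second shows that their difference is simultaneously an integer multiple of $b_1$ and an integer multiple of $b_2$, which already forces $b_1/b_2$ to be rational. After the harmless assumption that $b_1,b_2>0$ (so that the ratio is positive, as $R_d$ requires), I may write $b_1/b_2=p/q$ with $p,q$ positive integers and $\gcd(p,q)=1$. The whole lemma then amounts to bounding both $p$ and $q$ by $\lfloor\frac{\ell-1}{r-1}\rfloor$.

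The key step is to identify the exact ``step size'' shared by the common elements. I claim that every difference of two common elements is an integer multiple of $D:=q\,b_1=p\,b_2$. Indeed, such a difference has the form $k\,b_1=k'\,b_2$ for some integers $k,k'$; substituting $b_1=(p/q)b_2$ gives $kp=k'q$, and since $\gcd(p,q)=1$ this forces $q\mid k$, so the difference is a multiple of $q\,b_1=D$. Consequently all $r$ common elements lie in a single arithmetic progression of step $D$, that is, they equal $x_0+m_iD$ for $r$ distinct integers $m_1,\ldots,m_r$. Hence the largest common element exceeds the smallest by at least $(r-1)D$.

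Finally I would combine this span estimate with the fact that all common elements sit inside each of the two length-$\ell$ progressions, whose total spreads are $(\ell-1)b_1$ and $(\ell-1)b_2$. This yields $(r-1)D\leq(\ell-1)b_1$ and $(r-1)D\leq(\ell-1)b_2$; using $D=q\,b_1$ in the first inequality and $D=p\,b_2$ in the second gives $q\leq\frac{\ell-1}{r-1}$ and $p\leq\frac{\ell-1}{r-1}$. As $p,q$ are integers, both are at most $\lfloor\frac{\ell-1}{r-1}\rfloor$, so $b_1/b_2=p/q\in R_{\lfloor\frac{\ell-1}{r-1}\rfloor}$, as desired. The one point demanding care---the main obstacle---is the coprimality argument establishing that $D$ is the exact common step; everything else is a span/packing count. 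Since $b_1,b_2$ are real and need not be integers, this step must be phrased purely through the reduced ratio $p/q$ rather than through any divisibility among the $b_i$ themselves.
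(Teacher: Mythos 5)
Your proof is correct. It is built on the same packing idea as the paper's proof---the $r$ common elements are forced to be spaced at least one ``common step'' apart, and all of them must fit inside a window of width $(\ell-1)b_1$ (respectively $(\ell-1)b_2$)---but you justify the key step differently. The paper simply invokes the standard fact that the intersection of the two progressions is itself an arithmetic progression, takes its first two terms $a_1+(i_1-1)b_1=a_2+(i_2-1)b_2$ and $a_1+(j_1-1)b_1=a_2+(j_2-1)b_2$, and bounds the index gaps, obtaining $\frac{b_1}{b_2}=\frac{j_2-i_2}{j_1-i_1}$ with both gap lengths at most $\lfloor\frac{\ell-1}{r-1}\rfloor$ (a representation that need not be in lowest terms, which is fine since $R_d$ admits all such fractions). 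You instead avoid citing that unproved structural fact: writing $b_1/b_2=p/q$ in lowest terms, your coprimality argument shows every difference of common elements is a multiple of $D=qb_1=pb_2$, which is exactly the lattice structure the paper's cited fact encodes, and then you bound the reduced numerator and denominator. What each buys: the paper's version is shorter, at the cost of leaning on an assertion whose proof is essentially your coprimality computation; yours is fully self-contained and makes explicit why the rationality of $b_1/b_2$ and the bound on its reduced form come out of the same mechanism. Your remark that positivity of $b_1,b_2$ must be assumed is also apt---the lemma is false for differences of opposite signs, and the paper implicitly works in the setting where all differences are positive reals.
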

\begin{proof}
The intersection of the arithmetic progressions 
$(a_1+(j-1)b_1)_{j=1}^{\ell}$ and $(a_2+(j-1)b_2)_{j=1}^{\ell}$ is in itself an 
arithmetic progression, whose length is at least $r\geq 2$. Suppose 
$a_1+(i_1-1)b_1=a_2+(i_2-1)b_2$ and $a_1+(j_1-1)b_1=a_2+(j_2-1)b_2$ 
are, respectively, the first and second terms of this arithmetic progression.
Since the progression is of length at least $r$, then 
$(r-1)(j_1-i_1)\leq\ell-1$ and $(r-1)(j_2-i_2)\leq\ell-1$.
It follows that  $j_1-i_1 \leq \lfloor\frac{\ell-1}{r-1}\rfloor$ and 
$j_2-i_2 \leq \lfloor\frac{\ell-1}{r-1}\rfloor$.

We also have 
$$
(j_1-i_1)b_1=(a_1+(j_1-1)b_1)-(a_1+(i_1-1)b_1)=(a_2+(j_2-1)b_2)-(a_2+(i_2-1)b_2)=
(j_2-i_2)b_2.
$$

Consequently,
\begin{equation*}
\frac{b_1}{b_2}=\frac{j_2-i_2}{j_1-i_1}\in 
R_{\lfloor\frac{\ell-1}{r-1}\rfloor}.\qedhere
\end{equation*}
\end{proof}

\begin{propos}\label{prop:gen-prog} 
For any $\varepsilon>0$ there is a positive constant $c_7(\varepsilon)$,
depending only on $\varepsilon$, 
such that for any positive integers $n$ and $\ell$ 
$$u_{\ell}(n)>c_7(\varepsilon)\min\left\{n^{1-\varepsilon}\,\ell,\,\ell^2\right\}.$$ 
\end{propos}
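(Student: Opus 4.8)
The plan is to mimic the double counting behind the proof of Proposition \ref{f}, but with the set of differences playing the role of $B$ and with the length-$\ell$ overlap structure supplied by Lemma \ref{intersection}. So let $P_1,\dots,P_n$ be arithmetic progressions, each of length $\ell$, with pairwise distinct differences, which I may assume to be positive reals (reversing a progression changes neither its underlying set nor the union $U=\bigcup_i P_i$). For $x\in U$ put $m(x)=|\{i:x\in P_i\}|$, so that $\sum_{x\in U}m(x)=n\ell$, and for $i<j$ put $r_{ij}=|P_i\cap P_j|$. Counting, through each point, the pairs of progressions incident to it gives the identity $\sum_{x\in U}\binom{m(x)}{2}=\sum_{i<j}r_{ij}$. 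Exactly as in \eqref{lower}, convexity of $\binom{x}{2}$ yields $\sum_{x\in U}\binom{m(x)}{2}\ge|U|\binom{n\ell/|U|}{2}$. If the average multiplicity $n\ell/|U|$ is below $2$ then $|U|>n\ell/2$ and we are already done; otherwise this lower bound is at least $(n\ell)^2/(4|U|)$.

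The heart of the matter is an upper bound for $\sum_{i<j}r_{ij}$. Writing $r_{ij}=1+(r_{ij}-1)$, the constant term contributes at most $\binom{n}{2}$ in total, one for each intersecting pair. For the remaining part I would invoke Lemma \ref{intersection}: whenever $r_{ij}\ge2$ the ratio $b_i/b_j$ of differences lies in $R_{\ell-1}$, and if $p/q$ (with $p<q$) is this ratio in lowest terms then $r_{ij}-1\le(\ell-1)/q$. Bounding $r_{ij}-1$ by $(\ell-1)/q$ and summing over all pairs whose difference-ratio lies in $R_{\ell-1}$, grouped by the reduced denominator $q$, produces exactly the weighted sum $\sum_{q=2}^{\ell-1}\bigl(|\mathcal{G}_q(B)|-|\mathcal{G}_{q-1}(B)|\bigr)\frac{\ell-1}{q}$ that appears in the proof of Proposition \ref{f}, now with $d$ replaced by $\ell-1$ and $B$ the set of differences ($|B|=n$). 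The same Abel summation, followed by Proposition \ref{g} for the large-$q$ terms and the easy bound \eqref{eq:g_easy_bound} for the $O(1)$ small-$q$ terms, bounds this sum by $c(\varepsilon)\,n^{1+\varepsilon}(\ell-1)$ for some constant $c(\varepsilon)$ depending only on $\varepsilon$. Altogether $\sum_{i<j}r_{ij}<\tfrac12 n^2+c(\varepsilon)\,n^{1+\varepsilon}\ell$.

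Combining the two bounds gives $(n\ell)^2/(4|U|)<\tfrac12 n^2+c(\varepsilon)n^{1+\varepsilon}\ell$, hence
$$|U|>\frac{(n\ell)^2}{2n^2+4c(\varepsilon)n^{1+\varepsilon}\ell}\ge\frac{1}{2+4c(\varepsilon)}\cdot\frac{(n\ell)^2}{\max\{n^2,\,n^{1+\varepsilon}\ell\}}=\frac{1}{2+4c(\varepsilon)}\min\{\ell^2,\,n^{1-\varepsilon}\ell\},$$
which is the desired conclusion with $c_7(\varepsilon)=\min\{\tfrac12,\tfrac{1}{2+4c(\varepsilon)}\}$. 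It is worth noting how the two branches of the minimum arise from genuinely different sources: the trivial count $\binom{n}{2}$ of merely intersecting pairs is responsible for the $\ell^2$ term, while the arithmetic input (Lemma \ref{intersection} together with the $g_d$ estimate of Proposition \ref{g}) is responsible for the $n^{1-\varepsilon}\ell$ term. I expect the main obstacle to be the upper estimate of $\sum_{i<j}r_{ij}$: one must convert each intersection size into a constraint on the reduced denominator $q$ via Lemma \ref{intersection}, reorganize the resulting sum so that it matches the telescoping estimate of Proposition \ref{f}, and track constants carefully so that both terms emerge in the clean form needed to read off the minimum. (A minor point, should one prefer to allow signed differences: reversing progressions to make all differences positive can identify at most the pair $\{b,-b\}$ for each value, creating at most $n/2$ parallel pairs whose total contribution to $\sum_{i<j}r_{ij}$ is at most $\tfrac12 n\ell$, which is absorbed into the estimate above.)
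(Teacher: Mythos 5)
Your proposal is correct and follows essentially the same route as the paper: both double count the incidences $\sum_{x\in U}\binom{\alpha(x)}{2}=\sum_{i<j}|P_i\cap P_j|$, bound the pairwise intersections via Lemma \ref{intersection} together with Proposition \ref{g} and the easy bound \eqref{eq:g_easy_bound}, and finish with the convexity lower bound. The only difference is bookkeeping: the paper bounds $\sum_{i<j}|P_i\cap P_j|$ by a layer-cake sum over intersection-size thresholds $r$, invoking $g_{\lfloor\frac{\ell-1}{r-1}\rfloor}(n)$ for each $r$, while you group pairs by the reduced denominator $q$ of the difference ratio and Abel-sum as in the proof of Proposition \ref{f} --- two equivalent organizations of the same estimate (your explicit treatment of the low-multiplicity case and of signed differences is additional care, not a deviation).
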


\begin{proof} Let $P_1,P_2,\ldots,P_n$ be $n$ arithmetic progressions,
each of length $\ell$, with pairwise distinct differences. We write
$P_i=\{a_i+(j-1)b_i\}_{j\in[\ell]}$ for $i \in [n]$, and let 
$U=\bigcup _{i=1}^n P_i=\{a_i+(j-1)b_i\mid i\in[n],j\in[\ell]\}$ be the union of
these arithmetic progressions. 
For every $x\in U$ let $\alpha(x)=|\{i\in[n] \mid x \in P_i\}|$ 
be the number of progressions containing $x$. Clearly, 
$\sum_{x\in U}\alpha(x)=n\,\ell$.
The proof will follow by comparing lower and upper bounds for the cardinality
of the set 
$$
W=\{(x,i_1,i_2) \in U\times[n]^2 \mid x\in P_{i_1}\cap P_{i_2}, ~~i_1<i_2\}.
$$

To get an upper bound on $|W|$ notice that

$$
|W|=\sum_{1\leq i_1<i_2\leq n}\left\lvert P_{i_1}\cap P_{i_2}\right\rvert=
\sum_{r=1}^{\ell-1}\left\lvert\left\{(i_1,i_2)\in[n]^2 \mid b_{i_1}<b_{i_2},
~~|P_{i_1}\cap P_{i_2}|\geq r\right\}\right\rvert.
$$
Trivially,
$$
\left\lvert\left\{(i_1,i_2)\in[n]^2 \mid b_{i_1}<b_{i_2},~~|P_{i_1}\cap P_{i_2}|\geq 1\right\}\right\rvert\leq\binom{n}{2}.
$$ 
For $r\geq 2$ we use Lemma \ref{intersection} to obtain 
$$
\left\lvert\left\{(i_1,i_2)\in[n]^2 \mid b_{i_1}<b_{i_2},~~|P_{i_1}\cap P_{i_2}|\geq
r\right\}\right\rvert\leq
\left|\mathcal{G}_{\lfloor\frac{\ell-1}{r-1}\rfloor}(B)\right|
\leq g_{\lfloor\frac{\ell-1}{r-1}\rfloor}(n).$$
Hence,
$$|W|\leq \binom{n}{2}+\sum_{r=2}^{\ell-1}g_{\lfloor\frac{\ell-1}{r-1}\rfloor}(n).$$
Let $k:=\max\{\lceil1/\varepsilon\rceil,1\}$. By Proposition \ref{g}, there is a constant $c_2(k)$ such that for any 
$2\leq r\leq\lfloor\frac{\ell-1}{c_2(k)+1}\rfloor+1$ we have
$$
g_{\lfloor\frac{\ell-1}{r-1}\rfloor}(n)<(200k+1)n^{1+\frac{1}{k}}\left(\left\lfloor\frac{\ell-1}{r-1}\right\rfloor\right)^{1-\frac{1}{2k}}\leq(200k+1)n^{1+\frac{1}{k}}(\ell-1)^{1-\frac{1}{2k}}\frac{1}{r^{1-\frac{1}{2k}}}.
$$
For $\lfloor\frac{\ell-1}{c_2(k)+1}\rfloor+2\leq r\leq\ell-1$, we use the
simpler estimate \eqref{eq:g_easy_bound} to get
$$
g_{\lfloor\frac{\ell-1}{r-1}\rfloor}(n)<n\left(\left\lfloor\frac{\ell-1}{r-1}\right\rfloor\right)^2<(c_2(k)+1)^2\,n.
$$
Therefore,
$$
|W|\leq\binom{n}{2}+\sum_{r=2}^{\lfloor\frac{\ell-1}{c_2(k)+1}\rfloor+1}(200k+1)n^{1+\frac{1}{k}}(\ell-1)^{1-\frac{1}{2k}}\frac{1}{r^{1-\frac{1}{2k}}}+\sum_{r=\lfloor\frac{\ell-1}{c_2(k)+1}\rfloor+2}^{\ell-1}(c_2(k)+1)^2\,n.
$$
Hence
\begin{equation}\label{gen:upper}|W|< c(\varepsilon)\,n\,\ell\,\max\{n/\ell,n^{1/k}\}|\leq c(\varepsilon)\, n\,\ell\,\max\{n/\ell,n^{\varepsilon}\},\end{equation}
for some positive constant $c(\varepsilon)$ depending only on $\varepsilon$.

\medskip

A simple lower bound for $|W|$ follows from the convexity
of $\binom{x}{2}=\frac{x(x-1)}{2}$:

\begin{equation}\label{gen:lower}
|W|=\sum_{x\in U}\binom{\alpha(x)}{2}\geq
|U|\binom{\frac{1}{|U|}\sum_{x\in U}\alpha(x)}{2}=
|U|\binom{n\ell/|U|}{2}.
\end{equation}

Comparing the upper and lower bounds for $|W|$, namely, 
\eqref{gen:upper} and \eqref{gen:lower}, we get 
$$
|U|\binom{n\ell/|U|}{2}< c(\varepsilon)\, n\,\ell\,\max\{n/\ell,n^{\varepsilon}\}.
$$
Hence
$$
|U|>\frac{n\,\ell}{1+2c(\varepsilon)\max\{n/\ell,n^{\varepsilon}\}},
$$
and the result follows.
\end{proof}

\bigskip

Combining Proposition \ref{cor:gen-prog} and Proposition \ref{prop:gen-prog}, we get
\medskip\newline{\bf Theorem \ref{thm:gen-prog}.} {\it For any $\varepsilon>0$ there is a positive constant $c_1(\varepsilon)$, depending only on $\varepsilon$, such that for any positive integers $n$ and $\ell$}  
\begin{equation*}\label{eq:gen-prog1}u_{\ell}(n)\geq\begin{cases} c_1(\varepsilon)\,n^{\frac{1}{2}-\varepsilon}\,\ell&\text{for }\;\ell\leq n^{\frac{1}{2}-\varepsilon}\\c_1(\varepsilon)\,\ell^2&\text{for }\; n^{\frac{1}{2}-\varepsilon}\leq\ell\leq n^{1-\varepsilon}\\ c_1(\varepsilon)\, n^{1-\varepsilon}\,\ell&\text{for }\; n^{1-\varepsilon}\leq\ell.\end{cases}\end{equation*}

\section{Further applications}\label{sec:con}
In this section we draw one (among many) possible number theoretical 
application to our upper bounds for the functions $f_d$ and $g_d$
in Sections \ref{sec:g} and \ref{sec:f}.

\begin{theorem}\label{theorem:app}
For every $\varepsilon>0$ there exists $c(\varepsilon)>0$ with the following
property.
Let $a_{1}< \ldots < a_{n}$ be $n$ natural numbers.
Then
\end{theorem}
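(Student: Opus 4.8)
The plan is to recognize the quantity to be bounded as an instance of the counting function $f_d$ from Section \ref{sec:f} and then to quote Proposition \ref{f} directly. Concretely, I would take $A=B=\{a_1,\ldots,a_n\}$. Because the $a_i$ are \emph{natural} numbers, the condition $\frac{a}{b}\in[d]$ for $a,b\in\{a_1,\ldots,a_n\}$ is exactly the condition that $b\mid a$ together with $a\le d\,b$; thus any count of divisibility pairs with a bounded quotient is literally computed by $f_d(n,n)$. The hypothesis $a_1<\cdots<a_n$ plays only two roles: it guarantees that the $a_i$ are distinct, so that $|A|=|B|=n$, and it lets me take for $d$ the natural threshold $a_n/a_1\le a_n$, under which \emph{every} divisibility pair $b\mid a$ among the $a_i$ automatically satisfies $a/b\in[d]$.

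First I would note that any pair $(a_i,a_j)$ with $a_j\mid a_i$ has quotient $a_i/a_j\le a_n/a_1\le a_n$, so with the choice $d:=a_n$ the set $\{(a_i,a_j):a_j\mid a_i\}$ is contained in $\{(a,b)\in A\times B:\ a/b\in[d]\}$, whose cardinality is at most $f_{a_n}(n,n)$. Then I would apply Proposition \ref{f} with $m=n$ and this value of $d$: since $\min\{\sqrt m,\sqrt n\}=\sqrt n$, it yields
$$
f_{a_n}(n,n)<c_5(\varepsilon)\,(\sqrt n)^{\varepsilon}\sqrt{n\cdot n\cdot a_n}=c_5(\varepsilon)\,n^{1+\frac{\varepsilon}{2}}\sqrt{a_n}\le c_5(\varepsilon)\,n^{1+\varepsilon}\sqrt{a_n},
$$
the last inequality simply because $n^{\varepsilon/2}\le n^{\varepsilon}$. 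This is the desired conclusion, with $c(\varepsilon):=c_5(\varepsilon)$. If instead the intended statement carries an explicit parameter $d$ and counts pairs with $a_i/a_j\in[d]$, the identical computation with that $d$ in place of $a_n$ gives the bound $c(\varepsilon)\,n^{1+\varepsilon}\sqrt d$, and no further argument is needed.

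I expect essentially no obstacle in the reduction itself, since all of the analytic work is already packaged inside Proposition \ref{f} (which in turn rests on the bound for $g_d$ in Proposition \ref{g}). The only points requiring care are bookkeeping ones: tracking the harmless factor $(\sqrt n)^{\varepsilon}=n^{\varepsilon/2}$ through the exponent, and deciding whether ordered or unordered pairs are counted, since the diagonal $a=b$ (the quotient $1$) contributes exactly $n$ and is negligible against $n^{1+\varepsilon}\sqrt{a_n}$. The genuinely substantive question—whether this upper bound is close to best possible—would be addressed separately by exhibiting many integers realizing a prescribed number of divisibility relations; the constructions underlying Proposition \ref{lowerg} give the right shape of lower bound, although one must clear denominators to make the extremal sets integral, so I would flag that as the one place where work beyond a direct citation of Proposition \ref{f} is required.
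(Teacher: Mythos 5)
Your proposal proves a different statement from the one in the theorem. The conclusion of Theorem \ref{theorem:app} (equation \eqref{eq:app}, which follows the word ``Then'') is
\begin{equation*}
\sum_{1\leq i<j\leq n}\frac{\gcd(a_{i},a_{j})}{a_{j}} < c(\varepsilon)\,n^{1+\varepsilon},
\end{equation*}
a bound on a weighted sum over \emph{all} $\binom{n}{2}$ pairs, uniform in the sizes of the $a_{i}$. Your bound $c_5(\varepsilon)\,n^{1+\varepsilon}\sqrt{a_n}$ (or $c(\varepsilon)\,n^{1+\varepsilon}\sqrt{d}$) cannot serve here, for two reasons. First, it degenerates as the integers grow: for $a_i=2^i$ the factor $\sqrt{a_n}=2^{n/2}$ makes your bound vacuous, while the theorem's right-hand side stays $c(\varepsilon)n^{1+\varepsilon}$ no matter how large the $a_i$ are. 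Second, and more fundamentally, the quantity in the theorem is not a count of divisibility pairs: a pair such as $(6,10)$ contributes $\gcd(6,10)/10=1/5$ even though neither number divides the other. Every pair contributes, with weight $1/d$ where $d=a_j/\gcd(a_i,a_j)$ is the denominator of $a_i/a_j$ in lowest terms; equivalently, the pairs contributing exactly $1/d$ are those with $\{a_i,a_j\}\in\mathcal{G}_{d}(B)\setminus\mathcal{G}_{d-1}(B)$, where $B=\{a_1,\ldots,a_n\}$. So the relevant machinery is the one-set function $g_d$ of Proposition \ref{g}, not the two-set function $f_d$ of Proposition \ref{f}.

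The paper's proof runs exactly along those lines: it writes the sum as $\sum_{d\ge 2}\bigl(|\mathcal{G}_{d}(B)|-|\mathcal{G}_{d-1}(B)|\bigr)\frac{1}{d}$, applies Abel summation to get $\sum_{d\ge 2}|\mathcal{G}_{d}(B)|\frac{1}{d(d+1)}$, and then splits the range of $d$: for $2\le d\le c_2(k)$ it uses the trivial bound $|\mathcal{G}_{d}(B)|<n\,d^2$ from \eqref{eq:g_easy_bound}, and for $d>c_2(k)$ it uses Proposition \ref{g}, $|\mathcal{G}_{d}(B)|<(200k+1)n^{1+\frac{1}{k}}d^{1-\frac{1}{2k}}$. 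The crucial point is that the weight $\frac{1}{d(d+1)}$ beats the growth $d^{1-\frac{1}{2k}}$, so the series $\sum_{d}d^{-1-\frac{1}{2k}}$ converges and the total is $O_k\bigl(n^{1+\frac{1}{k}}\bigr)$; choosing $k$ with $\frac{1}{k}<\varepsilon$ finishes. No quantity depending on the magnitudes of the $a_i$ ever appears --- that is exactly what your reduction to $f_{a_n}(n,n)$ loses, so the gap is not a bookkeeping issue but a wrong identification of the statement and of the tool needed to prove it.
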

\begin{equation}\label{eq:app}
\sum_{1\leq i<j\leq n}\frac{\text{gcd}(a_{i},a_{j})}{a_{j}}
< c(\varepsilon)n^{1+\varepsilon}.
\end{equation}

\begin{proof} Denote $B=\{a_{1}, \ldots, a_{n}\}$. Notice that every summand on the left hand side of (\ref{eq:app}) is
of the form $\frac{1}{k}$ for some positive integer $k$.
The simple but crucial observation is that if $1\leq i<j\leq n$ such that
$\frac{\text{gcd}(a_{i},a_{j})}{a_{j}}=\frac{1}{k}$, then 
$\frac{a_{i}}{a_{j}} \in R_{k}$. Therefore, $\frac{\text{gcd}(a_{i},a_{j})}{a_{j}}=\frac{1}{d}$, for $1\leq i<j\leq n$, if and only if
$\{a_i,a_j\} \in \mathcal{G}_{d}(B) \setminus \mathcal{G}_{d-1}(B)$.
(Recall the definition of $R_{k}$ and $\mathcal{G}_{d}(B)$ in Section \ref{sec:g}.)

Fix a positive integer $k$, to be determined later. 
By Proposition \ref{g}, there exists $c_2(k)>0$ such that for every $d>c_2(k)$
\begin{equation}\label{eq:T}
|\mathcal{G}_{d}(B)| \leq g_d(n)<(200k+1)n^{1+\frac{1}{k}}d^{1-\frac{1}{2k}}.
\end{equation}

For every $d$, $|\mathcal{G}_{d}(B)| \leq g_d(n)<nd^2$, by \eqref{eq:g_easy_bound}.
This easy upper bound will be useful when $d$ is small (smaller than $c_2(k)$).

We are now ready to prove the Theorem.

\begin{eqnarray}
\sum_{1\leq i<j\leq n}\frac{\text{gcd}(a_{i},a_{j})}{a_{j}} &=& 
\sum_{d\geq 2}\left\lvert\left\{(i,j)\in[n]^2\mid i<j,\,\frac{\text{gcd}(a_{i},a_{j})}{a_{j}}=\frac{1}{d}\right\}\right\rvert\cdot\frac{1}{d}=\nonumber\\
&=&\sum_{d \geq 2}\left(|\mathcal{G}_{d}(B)|-|\mathcal{G}_{d-1}(B)|\right)\frac{1}{d}=
\sum_{d \geq 2}|\mathcal{G}_{d}(B)|\left(\frac{1}{d}-\frac{1}{d+1}\right)=\nonumber\\
& = & \sum_{2 \leq d \leq c_2(k)}\frac{1}{d(d+1)}|\mathcal{G}_{d}(B)|+
\sum_{d > c_2(k)}\frac{1}{d(d+1)}|\mathcal{G}_{d}(B)| <\nonumber\\
&< &\sum_{2 \leq d \leq c_2(k)}\frac{1}{d(d+1)}n\, d^2+
\sum_{d > c_2(k)}\frac{1}{d(d+1)}(200k+1)n^{1+\frac{1}{k}}d^{1-\frac{1}{2k}}\leq\nonumber\\
&\leq&c_2(k)n+(200k+1)n^{1+\frac{1}{k}}\sum_{d > c_2(k)}\frac{1}{d^{1/2k}(d+1)}.
\nonumber
\end{eqnarray}

Take $k$ to be a positive integer such that $\frac{1}{k}< \varepsilon$
and let $c(\varepsilon)=
c_2(k)+(200k+1)\sum_{d > c_2(k)}\frac{1}{d^{1/2k}(d+1)}$
to get the desired result.
\end{proof}

\begin{urem} It is not hard to verify that the bound in 
Theorem \ref{theorem:app} cannot be improved to be linear in $n$.
This can be seen for example by taking $a_{1}, \ldots, a_{n}$ to be
$1, \ldots, n$, respectively. Then a direct computation, using some classical 
number theory estimates, show that in this case the left hand side of 
(\ref{eq:app}) is $\Theta(n\log n)$.
\end{urem}

Theorem \ref{theorem:app} allows us to write in a slightly different way the
proof of Proposition \ref{prop:gen-prog}, giving the lower bound for 
$u_{\ell}(n)$.

Indeed, suppose we wish to bound from below the union of 
$n$ arithmetic progressions, $A_{1}, \ldots, A_{n}$, each of length $\ell$,
with pairwise distinct differences $a_{1}, \ldots, a_{n}$, respectively. With no loss of generality we may assume that $a_{1}< \ldots< a_{n}$ and that they are all positive integers.
We will use the following 
well known estimate of Dawson and Sankoff (\cite{DawsonSankoff}) on the cardinality of the union of sets
via the cardinalities of their pairwise intersections.

\begin{equation}\label{eq:u-i}
|\bigcup_{i=1}^{n}A_{i}| \geq 
\frac{(\sum_{i=1}^{n}|A_{i}|)^2}{\sum_{1 \leq i,j \leq n}|A_{i} \cap A_{j}|}.
\end{equation}

Hence, we examine the cardinalities of the pairwise intersections of the
progressions. 

Consider two progressions of length $\ell$: $\{p+(j-1)q\}_{j=1}^{\ell}$ and
$\{p'+(j-1)q'\}_{j=1}^{\ell}$, where $q,q'$ are positive integers. Their intersection is in itself an arithmetic 
progression and it is not hard to see that the difference of this
progression (assuming it has at least two elements) 
is equal to the smallest number divisible by both $q$ and $q'$.
It follows that the size of the intersection of the two progressions
is less than or equal to $1+\frac{\min(\ell q, \ell q')}{\text{lcm}(q,q')}=
1+\ell\frac{\text{gcd}(q,q')}{\max(q,q')}$.

It follows from the above discussion that the union $|\bigcup_{i=1}^{n}A_{i}|$
is bounded from below by 
$$
\frac{(n\ell)^2}{n\ell+n^2+2\ell\sum_{1\leq i<j\leq n}\frac{\text{gcd}(a_{i},a_{j})}{a_{j}}}.
$$

In view of Theorem \ref{theorem:app}, this expression is greater than
$\min(\frac{1}{3c(\varepsilon)}n^{1-\varepsilon}\ell, \frac{1}{2}\ell^2)$.

\bigskip

It is interesting to note the relation of Theorem \ref{theorem:app}
to a well known conjecture of Graham (\cite{Graham}).
Graham conjectured that given any $n$ positive integers $a_{1}< \ldots <a_{n}$,
there are two of them that satisfy $\frac{a_{j}}{\text{gcd}(a_{i},a_{j})} \geq n$.
This conjecture has a long history with many contributions. It was finally
completely (that is, for all values of $n$) solved in \cite{BalasubramanianSoundararajan}, 
where one can also find more details on the history and references related
to this conjecture.
 
From \eqref{eq:app} it follows that there is a pair of indices
$1\leq i<j\leq n$ such that 
$\frac{\text{gcd}(a_{i},a_{j})}{a_{j}} < \frac{c(\varepsilon)n^{1+\varepsilon}}{\binom{n}{2}}.$ 
This implies $\frac{a_{j}}{\text{gcd}(a_{i},a_{j})} > \frac{1}{2c(\varepsilon)}n^{1-\varepsilon}$. This lower bound is indeed much weaker than the desired one in the conjecture of Graham, but on the 
other hand this argument shows that ``in average'' 
$\frac{a_{j}}{\text{gcd}(a_{i},a_{j})}$ is 
quite large.

\bigskip

{\bf \Large Acknowledgments}

\medskip

We thank Seva Lev for interesting discussions about the problem and for 
pointing out the relation of Theorem \ref{theorem:app} to the conjecture
of Graham.

\end{document}